\newcommand{\R}{\mathbb{R}}
\newcommand{\hH}{\mathbb{H}}
\newcommand{\cH}{\mathcal{H}}
\newcommand{\cB}{\mathcal{B}}
\newcommand{\Sc}{{\rm Sc }}
\newcommand{\Ve}{{\rm Vec }}
\newcommand{\grad}{{\rm grad }}
\newcommand{\ov}{\overline}
\newcommand{\Cl}{{C \kern -0.1em \ell}}
\newcommand{\inner}[1]{\left\langle  #1 \right\rangle } 
\newcommand{\sinner}[1]{\left[  #1 \right] } 
\newcommand{\e}{{\bf e}}
\newcommand{\bu}{{\bf u}}
\newcommand{\bv}{{\bf v}}
\newcommand{\bw}{{\bf w}}
\newcommand{\bx}{{\bf x}}
\newcommand{\by}{{\bf y}}
\newcommand{\bB}{{\bf B}}
\newcommand{\bh}{{\bf h}}
\newcommand{\bH}{{\bf H}}
\newcommand{\bD}{{\bf D}}
\newtheorem{theorem}{Theorem}
\newtheorem{lemma}{Lemma}
\newtheorem{corollary}{Corollary}
\newtheorem{remark}{Remark}
\title[Variational principles in quaternionic analysis]{Variational principles in quaternionic analysis with applications to the stationary MHD equations}
\author{
P. Cerejeiras 
\and
U. K\"ahler 
\and
R.S.~Krau{\ss}har} 
\begin{document}
 
\begin{abstract}
In this paper we aim to combine tools from variational calculus with modern techniques from quaternionic analysis that involve Dirac type operators and related hypercomplex integral operators. The aim is to develop new methods for showing geometry independent explicit global existence and uniqueness criteria as well as new computational methods with special focus to the stationary incompressible viscous magnetohydrodynamic equations. 
 
We first show how to specifically apply variational calculus in the quaternionic setting. To this end we explain how the mountain pass theorem can be successfully applied to guarantee the existence of (weak) solutions. To achieve this, the quaternionic integral operator calculus serves as a key ingredient allowing us to apply Schauder's fixed point theorem. The advantage of the approach using Schauder's fixed point theorem is that it is also applicable to large data since it does not require any kind of contraction property. 

These consideration will allow us to provide explicit iterative algorithms for its numerical solution. Finally to obtain more precise a-priori estimates one can use in the situations dealing with small data the Banach fixed point theorem which then also grants the uniqueness.

\end{abstract}

\maketitle 

{\bf Keywords}: quaternionic integral operator calculus, stationary incompressible viscous magnetohydrodynamics equations, Dirac operators, existence and uniqueness theorems, variational calculus, mountain pass theorem, coercivity

{\bf MSC Classification}: 30 G 35; 76 W 05\\[0.2cm]

This paper is dedicated to Professor John Ryan on the occasion of his retirement.

\section{Introduction}
Nowadays, there are many powerful numerical methods available, for instance well-developed Finite Elements or Boundary Elements Methods, that can be used to solve numerically complicated non-linear systems of partial differential equations arising in modern problems of mathematical physics and engineering in dimension $n \ge 3$. According to our knowledge, much less has been developed on the level of analytical methods. To get deeper theoretical inside on the existence, uniqueness, regularity and the structure of solutions analytical methods are very useful, too. 

In 1989 K. G\"urlebeck and W. Spr\"o{\ss}ig proposed in their first book \cite{GS1} a new analytic toolkit for the treatment of three-dimensional non-linear elliptic boundary value problems, including the stationary viscous Navier-Stokes system. The main ingredients are quaternionic integral operators that arise from a higher dimensional generalization of complex function theory in the sense of the Riemann approach which considers null-solutions to the Euclidean Dirac operator, see also \cite{DSS} or the famous volume \cite{McIntosh} edited by J. Ryan which addressed  particularly applications to singular boundary value operators, including Calderon Zygmund type operators acting on Lipschitz surfaces. During the following two decades M. Shapiro, V.V. Kravchenko, S.  Bernstein, P. Cerejeiras, U. K\"ahler, J. Ryan, F. Sommen, N. Vieira, see for example \cite{Be,CK1,CK2,CV2009,GS1,K,KS,GKR}, and many others extended this new analytic machinery to also treat non-linear elliptic, parabolic and hypoelliptic systems. 

The quaternionic calculus leads to some new explicit criteria for the existence and the uniqueness of the solutions and also provided some information on the regularity behavior. Based on this quaternionic operator calculus also new numerical algorithms have been worked out in the first decade of this century, see for instance \cite{FGHK,GH}. In \cite{FGHK,GH} it was particularly shown how one can directly derive from the continuous quaternionic operator calculus a discrete quaternionic calculus version, which then even guarantees strong convergence instead of weak convergence only. Furthermore, for some particular domains under some special conditions these methods even allowed us to fully explicit representation formulas for the solutions to the Navier-Stokes equations, the Maxwell, Helmholtz and time independent Klein-Gordon equation, see for example \cite{ConKra5}. 

In the series of more recent papers \cite{CKK2019,KraussharTrends1,KraAACA2014} it has been shown how to apply the classical quaternionic operator calculus even to the much more complicated stationary viscous magnohydrodynamic equations (MHD) that combine the Navier-Stokes system with the Maxwell-equation. See also the more recent book \cite{GS3} Section~7.7.6.  

Since the works of M. Sermagne and R. Temam in 1983 the study of MHD equations have attracted the interested of a constantly and rapidly growing research community - just to mention a few milestones and some more recent developments we recommend the reader for instance to consult \cite{BBGHK,BD,Cannone,CMZ,DTE,Gala,GeSh2015,GP,GMP,GT,HW,Lei2015,Meir,MY,REM2017,ST,XZ-ZY2017} among many others.

Also complex quaternions and even octonions have been used for instance in \cite{DTE,TDT} in the description of the dynamics of dyonic plasmas in an elegant way. 

Most of the literature, including our recent papers \cite{CKK2019,KraussharTrends1,KraAACA2014}, in which we presented explicit estimates involving operator norms and the Reynolds numbers under which we obtain global existence and uniqueness of the solutions, use the Banach fixed point algorithm. 

We got an explicit geometry independent formula for the Lipschitz constant, however to have the contraction property we can only apply the presented algorithm to small data. This is a general problem when using the Banach fixed point algorithm, see also \cite{BKM}. One way to overcome this problem to also address large data is to work with Schauder's fixed point theorem instead, such as suggested for the Navier-Stokes system also in \cite{BKM}. To apply the ideas of Schauder's fixed point theorem in the framework of the quaternionic operator calculus however some substantial  new tools have to be developed first.

In this paper we now aim for the first time according to our knowledge to combine tools from variational calculus with tools from quaternionic analysis. Indeed, our approach then will allow us to develop new methods for showing geometry independent explicit global existence and uniqueness criteria as well as new computational methods using Schauder's fixed point theorem.  Again we concretely focus on the stationary incompressible viscous magnetohydrodynamic equations while the instationary case and other cases will be studied in some of our follow-up papers.  

Concretely, in Section~2.3 we show how to specifically apply the variational calculus in the quaternionic setting. To this end we explain in Section~2.4 how the mountain pass theorem can be successfully applied to guarantee the existence of (weak) solutions. To achieve this, the quaternionic integral operator calculus serves again as a key ingredient allowing us later in Section~3 to apply Schauder's fixed point theorem. As already pointed out, the great advantage of the approach using Schauder's fixed point theorem is that it is also applicable to large data since it does not require any kind of contraction property. These considerations will allow us to provide explicit iterative algorithms for its numerical solution which we also present in Section 3, see equations (\ref{u-modified})-(\ref{p-modified}).   
A disadvantage is of course that we only get the existence but in general not the uniqueness of a solution by following this approach. For the sake of giving a complete state of the art we also briefly summarize at the end of the paper in the appendix how the quaternionic operator calculus gives a unique solution when the Banach fixed point is applicable such that the reader is provided with an offrounded self-contained presentation.


\section{Toolkits for solving the stationary convective MHD equations}
\subsection{The stationary convective model}

We recall the  stationary convective case of the MHD equations. To leave it simple we work in the dimension-less setting and consider the system 
\begin{gather}  
-\frac{1}{R_e} \Delta {\bf u} + ({\bf u} \cdot \nabla) {\bf u} + \nabla p = \frac{1}{\mu_0} (\nabla \times {\bf B}) \times {\bf B}, \quad in ~ \Omega, \label{Eq:01} \\
-\frac{1}{R_m} \Delta {\bf B} - ({\bf u} \cdot \nabla) {\bf B} + ({\bf B} \cdot  \nabla) {\bf u}  = 0, \quad in ~ \Omega, \label{Eq:02} \\
\nabla \cdot {\bf u} =  \nabla \cdot {\bf B} = 0, \quad in ~ \Omega, \label{Eq:03} \\
{\bf u}=0, {\bf B}={\bf h}, \quad at ~\partial \Omega = \Gamma, \label{Eq:04}
\end{gather} where $\bu=(u_1, u_2, u_3)$ represents the velocity of the flow, $p$ the pressure, $\bB=(B_1, B_2, B_3)$ the magnetic field, $\mu_0$ is the magnetic permeability of the vacuum, and $R_e, R_m$ denote the fluid mechanical and the magnetic Reynolds numbers, respectively. 
 
Equations (\ref{Eq:03}) express the incompressibility of the flow and the non-existence of magnetic monopoles, respectively, while  (\ref{Eq:04}) indicate the data at the boundary of the domain. From now on, we shall refer to this system as System \ref{Eq:01}. 

This paper addresses the case where the non-linear convective terms $(\bu ~ \grad) \bu, (\bB ~\grad) \bu,$ and $(\bu ~\grad) \bB$ are not negligibly small, but where the flow is still viscous. 

We remark that all quantities are three-dimensional. Furthermore  $\cdot$ and $\times$ denote the standard inner product and vector product, respectively. Furthermore, we assume that $\Omega$ is a bounded domain in $\R^3$ with a boundary $\Gamma = \partial \Omega$ representable as a finite union of disjoint closed $C^2$ surfaces (each having finite surface area). More generally, our considerations can also be applied to the case where $\Gamma$ is a finite union of Liapunov surfaces or even a strongly Lipschitz surface in the sense of \cite{LMS,McIntosh}. 

\subsection{Quaternionic formulation of the problem} 

\subsubsection{Algebraic concepts}

We consider the quaternionic algebra 
\begin{equation} \label{Eq:2.01} \hH = \{ q= x_0 + x_1 \e_1 + x_2 \e_2 + x_3 \e_3 : \quad x_0, x_1, x_2, x_3 \in \R \}, 
\end{equation} where the elements $\e_1, \e_2, \e_3$ satisfy the  multiplication rules
\begin{equation}  \label{Eq:2.02} \e_1 \e_2 = - \e_2 \e_1 = \e_3, \quad \e_1^2 = \e_2^2 = \e_3^2 = -1. 
\end{equation} The term $x_0 =:\Sc(q)$ is called the real part or sometimes the scalar part of the quaternion $q$ and $\bx = x_1 \e_1 + x_2 \e_2 + x_3 \e_3$ is its vectorial part. The latter is also denoted by $\Ve(q)$. This consideration allows the embedding of $\R^3$ into $\hH$ by identifying a purely imaginary element $\bx = x_1 \e_1 + x_2 \e_2 + x_3 \e_3$ from $\hH$ with a vector $(x_1, x_2, x_3)$ from $\R^3$.  Since the product of two quaternions can be expressed in terms of a symmetric and an anti-symmetric part, we obtain the following relation between the symmetric and the anti-symmetric part, as well as between the standard inner product and the vector product, sometimes also called outer product, in $\R^3$:
\begin{equation}  \label{Eq:2.03} 
\bx \by = \frac{\bx \by + \by \bx}{2} + \frac{\bx \by - \by \bx}{2} = \Sc(\bx \by) + \Ve(\bx \by) = -\bx \cdot \by + \bx \times \by.
\end{equation}

The quaternionic conjugation is the involutory automorphism $\ov{\cdot} : \hH \rightarrow \hH$ defined by $q = x_0 +\bx \mapsto \ov{ q}= x_0 - \bx$. 
Using this notation we can express the Euclidean norm in $\hH$ by $$| q |^2 = q\ov{q} = \ov q q = x_0^2 + x_1^2 + x_2^2 + x_3^2, \quad q= x_0 +\bx \in \hH.$$

\subsubsection{Quaternionic-Hilbert modules}

Given an open domain $\Omega \subset \R^3,$ we define a quaternion-valued function $f : \Omega \subset  \R^3 \rightarrow \hH$ as \begin{equation}  \label{Eq:2.04} 
\bx=x_1 \e_1 + x_2 \e_2 + x_3 \e_3 ~\mapsto ~f(\bx) =  f_0(\bx) + f_1(\bx) \e_2 + f_2(\bx) \e_2 + f_3(\bx) \e_3, \end{equation}where $f_0, f_1, f_2, f_3 : \Omega \subset  \R^3 \rightarrow \R$ are its real-valued coefficient functions. Properties such as continuity, etc.,  are to be understood component-wisely. A $C^1$ quaternion-valued function $f : \Omega \subset  \R^3 \rightarrow \hH$ is called monogenic in  $\Omega \subset \R^3$ if and only if it is a null solution of the Dirac operator $\bD,$ that is,
\begin{equation}  \label{Eq:2.05} 
\bD f =  (\partial_{x_1} \e_1 + \partial_{x_2} \e_2+\partial_{x_3} \e_3 ) f = 0, \quad {\rm in }~\Omega. \end{equation}
In the particular case of pure vectorial functions, i.e. functions with the mapping behavior $\bu = u_1 \e_1 + u_2 \e_2 + u_3 \e_3 : \Omega \subset  \R^3 \rightarrow \hH$ the action of $\bD$ can be expressed as 
\begin{equation}  \label{Eq:2.06} 
\bD \bu = \Sc(\bD \bu) + \Ve(\bD \bu) = -\nabla \cdot \bu + \nabla \times \bu.\end{equation} 
Additionally, we remark that $\Delta \bu = -\bD^2 \bu.$ \\

Using the quaternionic System (\ref{Eq:01})-(\ref{Eq:04}) can be rewritten in the form  
\begin{gather}
\frac{1}{R_e} {\bD}^2 {\bf u} - \Sc({\bf u} \bD) {\bf u} + \bD p = \frac{1}{\mu_0} \Ve ((\bD {\bf B})  {\bf B}), \quad in ~ \Omega, \label{Eq:05} \\
\frac{1}{R_m} {\bD}^2 {\bf B} + \Sc({\bf u} \bD)  {\bf B} - \Sc({\bf B} \bD)  {\bf u}  = 0, \quad in ~ \Omega, \label{Eq:06} \\
\Sc(\bD \bu) = \Sc( {\bD \bB}) = 0, \quad in ~ \Omega, \label{Eq:07} \\
{\bf u}=0, {\bf B}={\bf h}, \quad at ~\partial \Omega = \Gamma, \label{Eq:08}
\end{gather} 
which will be called System \ref{Eq:02}  from now on. Here again, $R_e$ and $R_m$ denote the mechanical and magnetic Reynolds numbers, respectively. 
Recall here that $ \Ve(\bD\bB) = \bD \bB$ since we always have that $\Sc(\bD \bB) = \nabla \cdot \bB =0$.

Next we define a right (unitary) quaternionic function module as  a function space $V$ (over an open domain $\Omega \subset \R^3$ whose boundary is for instance a smooth Liapunov surface) together with the algebra morphism (also called right multiplication) $R : \hH \mapsto {\rm End}(V),$ given as the point-wise multiplication $\bx \mapsto (R(a)f)(\bx) = f(\bx)a.$ When $V$ is a Hilbert space we say we have a \textit{quaternionic Hilbert module}. \\


The quaternionic Hilbert module $\mathcal{H}=L^2(\Omega; \mathbb H)$  gives rise to the quaternionic valued functional
\begin{equation} \label{HFunctional}
\inner{\bu, \bv}_{L^2} = \int_\Omega \ov{\bu} \bv \; d\bx,
\end{equation} for $\bu, \bv \in \mathcal{H},$ and where $d\bx= dx_1dx_2dx_3.$ Associated to this (quaternionic valued) functional is the scalar functional 
\begin{equation} \label{sFunctional}
\sinner{\bu, \bv} = \Sc(\inner{\bu, \bv}_{L^2}) = \Sc \left( \int_\Omega \ov{\bu} \bv \;d\bx \right),
\end{equation} 
which corresponds to a real-valued inner product and, hence, gives rise to a norm in the classic sense. Together with this norm and (quaternionic-valued) inner product the set $\cB(\Omega;\mathbb{H}) := L^2(\Omega;\mathbb{H}) \cap \{f: \Omega \to \mathbb{H} | ~{\bD} f(\bx) = 0,\;\forall \bx \in \Omega\}$ is a right-quaternionic Hilbert module called the quaternionic Bergman module of left monogenic functions. It is a reproducing kernel Hilbert module in the sense that there is a unique kernel $B(\bx,\by)$ satisfying $f(\bx) = \inner{B({\bf x},\cdot), f}$ for all $f \in \cB(\Omega;\mathbb{H})$. For details, see for instance \cite{DSS}. Associated to it one can consider the quaternionic Bergman projection $P:L^{2}(\Omega;\mathbb{H}) \to \cB(\Omega;\mathbb{H})$ given by 
$Pf(\bx) = \inner{B({\bf x},\cdot), f}$. This projection is an orthogonal projection with respect to $\inner{\cdot,\cdot}$. 

\par\medskip\par
In what follows, we consider 
$$\cH = H_2^1(\Omega; \hH) = \left\{ \bu \in L^2(\Omega; \hH) : \| \bu \|_{H_2^1} := \int_{\Omega} \left( |\bu |^2 + \sum_{i=1}^3 |\partial_{x_i} \bu  |^2 \right) d\bx < \infty \right\},$$ 
the Sobolev space of $L^2(\Omega; \hH)$ functions such that the functions and its weak derivatives have finite $L^2-$norm, together with the space of test functions 
$${\stackrel{\circ}{H_2^1}}(\Omega; \hH) = \left\{ \bu \in H_2^1(\Omega; \hH) : \left. \bu \right|_{\partial \Omega} = 0 \right\}.$$
Also for simplicity sake, we shall denote from now on the spaces $H_2^1$ and ${\stackrel{\circ}{H_2^1}}$ by $H^1$ and ${\stackrel{\circ}{H^1}}$, respectively, thus omitting the lower index $2$.
\par\medskip\par
Following \cite{GS1,GS2} and others, one has a Hodge type decomposition (in terms of a direct orthogonal sum) of the form 
$$
L^2(\Omega;\mathbb{H}) = \cB(\Omega;\mathbb{H}) \oplus {\bf D} {\stackrel{\circ}{H^1}}(\Omega;\mathbb{H}),
$$
where ${\stackrel{\circ}{H^1}}$ is the usual Sobolev space of the $L^2$-function having a first weak Sobolev derivative and with vanishing boundary data. In all that follows we shall denote the complementary part of the Bergman projection by $Q:= I -P$ where $I$ is the identity operator. Note that $Q:L^2(\Omega;\mathbb{H}) \to {\bf D} {\stackrel{\circ}{H^1}}(\Omega;\mathbb{H})$. 
\par\medskip\par
We now use the fact that the differential operator $\bD$ has a left (but not right) inverse. Following for instance \cite{GS1} and others for all $u \in C(\Omega;\mathbb{H})$ we have 
$$
(T_{\Omega} u)(\bx) := - \frac{1}{4\pi} \int_{\Omega} q_{\bf 0}(\bx-\by)u(\by) d\by,
$$
where $q_{\bf 0}(\bx) = -\frac{\bx}{|\bx|^3}$.  $T_{\Omega}$ is called the (quaternionic) Teodorescu transform. Its boundary analogue is the quaternionic Cauchy transform given by 
$$
(F_{\Gamma} u)(\bx) :=  \frac{1}{4\pi} \int_{\Gamma} q_{\bf 0}(\bx-\by)n(\by) u(\by) dS(\by)
$$
Here, $dS(\by)$ is the surface element and $n(\by)$ denotes the exterior normal unit at $\by \in \partial \Omega = \Gamma$. 

Again, we may recall from \cite{GS1} the following important properties
\begin{equation}\label{DT}
\forall u \in C^2(\Omega;\mathbb{H}) \cap C(\overline{\Omega};\mathbb{H}):\quad (\bD T_{\Omega} u)(\bx) = u(\bx) \quad \forall \bx \in \Omega
\end{equation}
\begin{equation}\label{TD}
\forall u \in C^1(\Omega;\mathbb{H}) \cap C(\overline{\Omega};\mathbb{H}):\quad (F_{\Gamma}u)(\bx) + (T_{\Omega}{\bD}u)(\bx) = u(\bx).
\end{equation}
For our needs it is important to mention that these two relations remain valid in $H^1(\Omega;\mathbb{H})$, cf. \cite{GS1}. 
\par\medskip\par
Now apply the operator $T_{\Omega} Q T_{\Omega}$ to System  \ref{Eq:02}. The properties (\ref{DT}), (\ref{TD}), application of the quaternionic Cauchy integral formula $QF_{\Gamma} {\bD} \bu = 0, QF_{\Gamma} p =  0$, $QF_{\Gamma} \bu = 0$ (since $u|_{\Gamma}=0$) and the property that $\bD \bu \in \Ve(Q)$ which implies that $T_{\Omega}Q{\bD} \bu = T_{\Omega} \bD \bu$ allow us to rewrite System \ref{Eq:02} in the following form, cf. for details \cite{KraussharTrends1,KraAACA2014}:  
\begin{eqnarray}
\bu &=& \frac{R_e}{\mu_0} T_{\Omega}QT_{\Omega} \Bigg[\Ve\big((\bD \bB)\bB \big)-\Sc(\bu\bD)\bu \Bigg]-R_e T_{\Omega}QT_{\Omega}\bD p \label{velocitynonlinear}\\
\Sc(Qp) &=& \frac{1}{\mu_0} \Sc\Bigg[  Q T_{\Omega}\Ve\big((\bD\bB)\bB\big) - \Sc(\bu\bD)\bu   \Bigg] \label{pressurenonlinear}\\
\bB &=& R_m T_{\Omega}QT_{\Omega}\Bigg[ \Sc(\bB\bD)\bu-\Sc(\bu\bD)\bB   \Bigg] + F_{\Gamma} \bh + F_{\Omega}P_{\Omega}{\bH}  \label{eqn_magnetic}\\
\bu &=& 0,\quad \bB= \bh\;\;{\rm at}\;\;\Gamma. 
\end{eqnarray}
This system will be called System \ref{Eq:03} in what follows. Also here and in all that is going to follow the symbols $R_e$ and $R_m$ stand for the dimension-less mechanical and magnetic Reynolds numbers, respectively. 
\par\medskip\par

These representation formulas, which are deduced by quaternionic function theory tools will play a crucial role our proof of existence where we want to use Schauder's fixed point theorem in order to also admit the case of large data, for its motivation see also \cite{BKM}.  When it is clear which domain $\Omega$ is considered, the indices $\Omega$ and $\Gamma$ will be omitted in the sequel for the sake of simplicity. 

\subsubsection{Preparatory norm estimates}

For our specific purposes we need some particular norm estimates involving some the quaternionic operators that we introduced before. 

To start, as an application of the identity ${\bD} T_{\Omega} f = f$ and the orthogonality $\langle \bu, \bD p\rangle_{L^2}$ 
we may directly obtain the following generalization of Lemma~4.1 from \cite{CK2}: 
\begin{lemma}
Let $\bu \in H_2^k(\Omega;\mathbb{H}) \cap {\rm ker}\;{\rm div}(\bD), k \ge 1, p \in L^2(\Omega;\mathbb{H}), \bB \in H_2^k(\Omega;\mathbb{H})$ be solutions of system \ref{Eq:02}. Then we have 
\begin{equation}\label{estim1}
\|\bD \bu\|_{L^2} + R_e \|Q T_{\Omega} \bD p\|_{L^2} \le R_e \|T_{\Omega} M(\bu)\|_{L^2} 
\end{equation}
where $M(\bu) := \Sc(\bu \bD) \bu - \frac{1}{\mu_0} \Ve((\bD \bB)\cdot \bB)$. 
\end{lemma}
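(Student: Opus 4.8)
The plan is to start from the representation formulas of System \ref{Eq:03}, most directly from the pressure-free consequence encoded in (\ref{velocitynonlinear}), and to exploit the mapping property $\bD T_\Omega = I$ together with the $L^2$-orthogonality $\sinner{\bu,\bD p} = 0$ (which holds because $\bu \in \ker \Sc(\bD\,\cdot\,)$ and $\bD p$ lies in the range part $\bD\stackrel{\circ}{H^1}$ of the Hodge decomposition, once one recalls that $p$ may be taken with vanishing boundary contribution after applying $Q$). Concretely, I would apply $\bD$ to the velocity equation (\ref{velocitynonlinear}): since $\bD T_\Omega = I$ on $H^1$, the leftmost Teodorescu operator is killed and one is left with
\[
\bD\bu = R_e\, Q T_\Omega\Bigl[\Ve\bigl((\bD\bB)\bB\bigr) - \Sc(\bu\bD)\bu\Bigr] - R_e\, Q T_\Omega \bD p
= -R_e\, Q T_\Omega M(\bu) - R_e\, Q T_\Omega\bD p,
\]
after absorbing the sign into the definition of $M$. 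Rearranging gives $\bD\bu + R_e\, Q T_\Omega\bD p = -R_e\, Q T_\Omega M(\bu)$.

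The second step is to take the $L^2$-norm and deal with the sum on the left-hand side. The key observation is that the two summands are orthogonal in $L^2(\Omega;\hH)$, or at least that $\|\bD\bu + R_e Q T_\Omega \bD p\|_{L^2} \ge \|\bD\bu\|_{L^2} + R_e\|Q T_\Omega\bD p\|_{L^2}$ fails in general but the reverse-triangle-type bound we actually want is the one in (\ref{estim1}). Here I expect the real mechanism is a Pythagorean identity: $\bD\bu \in \Ve(Q)$ (the purely vectorial part of the range of $Q$, as noted before System \ref{Eq:03}) while $Q T_\Omega\bD p$ — being $Q$ applied to something — also lies in $\ran Q$, so one must instead use that $\bD\bu$ is orthogonal to $\bD\stackrel{\circ}{H^1}$-gradient-type terms, or simply that $\langle \bD\bu, \bD p\rangle = 0$ propagates through $T_\Omega Q$. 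If the two terms on the left are genuinely orthogonal, then $\|\bD\bu\|_{L^2}^2 + R_e^2\|Q T_\Omega\bD p\|_{L^2}^2 = R_e^2\|Q T_\Omega M(\bu)\|_{L^2}^2$, and since $\sqrt{a^2+b^2}\ge \frac{1}{\sqrt2}(a+b)$ one would only get the estimate with an extra $\sqrt2$; so more likely the intended argument keeps the left side as a single norm and then splits it, or uses that $\|Q\,\cdot\,\|\le\|\cdot\|$ to pass from $\|Q T_\Omega M(\bu)\|$ to $\|T_\Omega M(\bu)\|$ on the right, which is exactly the right-hand side of (\ref{estim1}).

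The third step is therefore the cleanest part: bound $\|Q T_\Omega M(\bu)\|_{L^2} \le \|T_\Omega M(\bu)\|_{L^2}$ using that $Q = I - P$ is an orthogonal projection and hence a contraction, yielding the stated right-hand side $R_e\|T_\Omega M(\bu)\|_{L^2}$. The main obstacle I anticipate is pinning down precisely which orthogonality makes the left-hand side split as a \emph{sum} of norms rather than merely an $\ell^2$-combination — this is where the structure "$\bD\bu$ is divergence-free vectorial, $\bD p$ is a pure gradient" must be used carefully, presumably via $\langle \bD\bu, Q T_\Omega\bD p\rangle_{L^2} = \langle \bu, \bD^\ast Q T_\Omega \bD p\rangle$-type manipulations and the self-adjointness/orthogonality relations of $P$ and $Q$ recorded above, together with the quoted Lemma~4.1 of \cite{CK2} whose proof presumably already contains the relevant splitting in the $\bB\equiv 0$ case; the present statement is just its extension with the Lorentz-force term folded into $M(\bu)$. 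I would model the argument line-by-line on that reference, checking only that the new term $\frac{1}{\mu_0}\Ve((\bD\bB)\cdot\bB)$ enters $M(\bu)$ additively and does not disturb any of the orthogonality used.
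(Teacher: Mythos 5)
Your route coincides with the paper's own: the paper offers no detailed argument at all, only the remark that the estimate follows ``directly'' from the identity $\bD T_{\Omega}f=f$ together with the orthogonality $\Sc\inner{\bu,\bD p}_{L^2}=0$, as a generalization of Lemma~4.1 in \cite{CK2}, and the ingredients you assemble (apply $\bD$ to (\ref{velocitynonlinear}), equivalently apply $QT_{\Omega}$ to (\ref{Eq:05}); use $\bD T_{\Omega}=I$ and $QF_{\Gamma}=0$; exploit the divergence-free/zero-trace structure of $\bu$; finish with $\|Q\|\le 1$) are exactly these. The orthogonality you leave somewhat vague can be pinned down along the lines you suggest: by Borel--Pompeiu, $QT_{\Omega}\bD p=Q(p-F_{\Gamma}p)=Qp$, and $\Sc\inner{\bD\bu,Qp}_{L^2}=\Sc\inner{\bD\bu,p}_{L^2}=\Sc\inner{\bu,\bD p}_{L^2}=0$, since $\bD\bu=Q\bD\bu$ is orthogonal to $\ran P$ and the last scalar product vanishes by the Section~2.4 computation ($\bu$ divergence free with vanishing trace). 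So your reconstruction is the intended proof.

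The point you flag at the end is, however, a genuine one, and the paper does not resolve it either: from $\frac{1}{R_e}\bD\bu+QT_{\Omega}\bD p=\pm\,QT_{\Omega}M(\bu)$ and the orthogonality one obtains the Pythagorean identity $\|\bD\bu\|_{L^2}^{2}+R_e^{2}\|QT_{\Omega}\bD p\|_{L^2}^{2}\le R_e^{2}\|T_{\Omega}M(\bu)\|_{L^2}^{2}$, which yields (\ref{estim1}) only with an extra factor $\sqrt{2}$ (or $2$, if each term is bounded separately by $R_e\|QT_{\Omega}M(\bu)\|_{L^2}$). The additive inequality with constant $1$ does not follow from this argument: for orthogonal summands $\|a\|+\|b\|\le\|a+b\|$ fails in general, and the only slack available on the right, namely $\|PT_{\Omega}M(\bu)\|_{L^2}$, is not related to the cross term $\|\bD\bu\|_{L^2}\,\|QT_{\Omega}\bD p\|_{L^2}$. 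So you should regard (\ref{estim1}) as established in the quadratic form (equivalently, up to a factor $\sqrt{2}$); the printed constant is a sloppiness of the statement rather than a missing idea in your proposal, and it is immaterial for the way such estimates are used later.
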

We also require 
\begin{lemma}
Let $\Omega \subset \mathbb{R}^3$ be a domain with the general requirements mentioned at the end of Section~2.1. Then, relying on \cite{GS1},  we have the following estimate on the operator norm  
\begin{equation}\label{estim2}
\|T_{\Omega} Q T_{\Omega} \| \le C_1 \quad\quad with \;C_1 = \frac{1}{\lambda_{min}(-\Delta)},
\end{equation} 
where $\lambda_{\min}(-\Delta)$ denotes the minimal eigenvalue of the negative Laplacian $-\Delta$.  
\end{lemma}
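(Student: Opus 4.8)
The plan is to identify $T_{\Omega}QT_{\Omega}$ with the solution operator of the homogeneous Dirichlet problem for $-\Delta$ on $\Omega$ and then read off its $L^2$-operator norm from the spectral theorem for self-adjoint operators.

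First I would show that for every $f \in L^2(\Omega;\hH)$ the function $u := T_{\Omega}QT_{\Omega}f$ lies in ${\stackrel{\circ}{H^1}}(\Omega;\hH)$ and satisfies $-\Delta u = f$ in $\Omega$ with $u|_{\Gamma}=0$. To this end I would use the mapping property $Q:L^2(\Omega;\hH) \to \bD{\stackrel{\circ}{H^1}}(\Omega;\hH)$ recorded above to write $QT_{\Omega}f = \bD g$ with $g \in {\stackrel{\circ}{H^1}}(\Omega;\hH)$; since $g$ has vanishing trace, $F_{\Gamma}g = 0$, and (\ref{TD}) then gives $u = T_{\Omega}\bD g = g - F_{\Gamma}g = g \in {\stackrel{\circ}{H^1}}(\Omega;\hH)$, in particular $u|_{\Gamma}=0$. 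Next, $\bD u = \bD T_{\Omega}(\bD g) = \bD(g-F_{\Gamma}g) = \bD g = QT_{\Omega}f$, and applying $\bD$ once more, using (\ref{DT}) together with $\bD PT_{\Omega}f = 0$ (because $PT_{\Omega}f \in \cB$ is monogenic), yields $\bD^2 u = \bD QT_{\Omega}f = \bD T_{\Omega}f = f$; since $\Delta = -\bD^2$ acts component-wise, this is exactly $-\Delta u = f$. (Alternatively one can quote the Dirichlet representation formula of \cite{GS1,GS2} directly.)

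With $u = T_{\Omega}QT_{\Omega}f$ so characterized, I would test the equation against $u$ in the real scalar product $[\,\cdot,\cdot\,]$ and integrate by parts component-wise (legitimate since $u|_{\Gamma}=0$), obtaining $\|\nabla u\|_{L^2}^2 = [\,-\Delta u, u\,] = [\,f,u\,] \le \|f\|_{L^2}\|u\|_{L^2}$. Combining this with the variational characterization of the lowest Dirichlet eigenvalue, $\lambda_{\min}(-\Delta) = \inf\{\|\nabla v\|_{L^2}^2/\|v\|_{L^2}^2 : 0 \neq v \in {\stackrel{\circ}{H^1}}(\Omega;\hH)\}$ (which holds component-wise, hence for $\hH$-valued $v$), gives $\lambda_{\min}(-\Delta)\,\|u\|_{L^2}^2 \le \|\nabla u\|_{L^2}^2 \le \|f\|_{L^2}\|u\|_{L^2}$, i.e. $\|T_{\Omega}QT_{\Omega}f\|_{L^2} \le \lambda_{\min}(-\Delta)^{-1}\|f\|_{L^2}$. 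Since $f$ is arbitrary, (\ref{estim2}) follows; the bound is in fact sharp, being attained when each component of $f$ is a first Dirichlet eigenfunction.

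I expect the only non-routine point to be the rigorous justification of the identification in the first step on all of $L^2$: one has to apply (\ref{DT})--(\ref{TD}) and the Hodge decomposition to functions of borderline regularity (here $T_{\Omega}f \in H^1$ but $QT_{\Omega}f$ lies only in $L^2$), so I would lean on the $L^2$-validity of these identities established in \cite{GS1}. Once $T_{\Omega}QT_{\Omega} = (-\Delta)^{-1}$ with homogeneous Dirichlet data is in place, the estimate is immediate: $-\Delta$ with Dirichlet data on the bounded domain $\Omega$ is self-adjoint and positive with compact resolvent, so its inverse has operator norm $1/\lambda_{\min}(-\Delta)$.
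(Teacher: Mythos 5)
Your proof is correct, but it follows a genuinely different route from the paper. The paper's argument is a one-line factorization: it bounds $\|T_{\Omega} Q T_{\Omega}\| \le \|T_{\Omega}\|\,\|Q\|\,\|T_{\Omega}\|$, using the norm estimate $\|T_{\Omega}\| \le \lambda_{\min}(-\Delta)^{-1/2}$ quoted from \cite{GS1} together with $\|Q\|=1$ ($Q$ being an orthogonal projection), which multiplies out to the constant $C_1 = 1/\lambda_{\min}(-\Delta)$. You instead identify $T_{\Omega}QT_{\Omega}$ with the solution operator of the homogeneous Dirichlet problem for $-\Delta$ (via the Hodge decomposition, $F_{\Gamma}g=0$ for vanishing trace, (\ref{DT})--(\ref{TD}) and monogenicity of $PT_{\Omega}f$), and then obtain the bound from the variational characterization of $\lambda_{\min}(-\Delta)$; the steps you carry out are exactly the representation of the Dirichlet solution known from \cite{GS1,GS2}, and your caveat about extending (\ref{DT})--(\ref{TD}) to the $L^2$/$H^1$ setting is the right point to lean on the literature for. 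The trade-off: the paper's proof is shorter and needs only the two quoted operator norms, but it only gives an upper bound and hides the structural fact you make explicit, namely that $T_{\Omega}QT_{\Omega}$ \emph{is} the inverse Dirichlet Laplacian, from which you additionally conclude that the constant $C_1=1/\lambda_{\min}(-\Delta)$ is sharp (attained at first Dirichlet eigenfunctions) --- information the paper's submultiplicativity argument cannot deliver.
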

\begin{proof}
This inequality follows from the norm estimate $\|T_{\Omega}\| = \|\Delta^{-1}\| \le \frac{1}{\sqrt{\lambda_{min}(-\Delta)}}$ and from the other norm identity $\|Q\|=1$ ($Q$ is an orthogonal projector). 
\end{proof}
Furthermore, applying analogous arguments as in \cite{CK2,GS1} we can infer 
\begin{lemma}(Norm estimates)\\
Let $\bu,\bB$ be a solution of System \ref{Eq:02}. Then there is a global constant $C_S > 0$ (called Poisson constant)   such that 
\begin{eqnarray*}
\|\Sc(\bu \bD) \bu \|_{L^q} & \le & C_S \|\bu\|_{H^1}^2\\
\|\Ve\big( (\bD\bB)\bB \big)\|_{L^q} & \le & C_S \|\bB\|_{H^1}^2 \\
\| \bD \bB\|_{L^2} &\le &  C_S \|\bB\|_{H^1},
\end{eqnarray*}
with $1<q<3/2$. 
\end{lemma}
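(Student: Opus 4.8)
The plan is to establish the three inequalities by combining H\"older's inequality, the algebraic structure of the quaternionic products $\Sc(\bu\bD)\bu$ and $\Ve((\bD\bB)\bB)$, and the Sobolev embedding $H^1(\Omega;\hH) \hookrightarrow L^r(\Omega;\hH)$ valid for $1 \le r \le 6$ in dimension $3$, as done in \cite{CK2,GS1}. First I would observe that $\Sc(\bu\bD)\bu$ is, up to sign conventions coming from (\ref{Eq:2.03}) and (\ref{Eq:2.06}), exactly the classical convective term $(\bu\cdot\nabla)\bu$, so it is a sum of products of the form $u_j\,\partial_{x_j}u_k$; likewise $\Ve((\bD\bB)\bB)$ expands via (\ref{Eq:2.03}) into terms of the type $(\partial_{x_i}B_j)B_k$. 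Thus each of the first two quantities is pointwise bounded by a finite sum of products $|\nabla \bu|\,|\bu|$ (respectively $|\nabla\bB|\,|\bB|$).

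The core estimate is then: for $1<q<3/2$, apply H\"older's inequality with exponents chosen so that $\tfrac1q = \tfrac12 + \tfrac1s$, i.e. $s = \tfrac{2q}{2-q}$, to get
\begin{equation*}
\big\| |\nabla \bu|\,|\bu| \big\|_{L^q} \le \| \nabla\bu\|_{L^2}\, \|\bu\|_{L^s}.
\end{equation*}
Since $1<q<3/2$ forces $2<s<6$, the Sobolev embedding $H^1(\Omega;\hH)\hookrightarrow L^s(\Omega;\hH)$ (here $\Omega$ is bounded with the regularity assumed at the end of Section~2.1, so the embedding constant is finite) gives $\|\bu\|_{L^s}\le c\,\|\bu\|_{H^1}$, and trivially $\|\nabla\bu\|_{L^2}\le \|\bu\|_{H^1}$. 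Taking $C_S$ to be the resulting constant (absorbing the number of summands and the embedding constant) yields $\|\Sc(\bu\bD)\bu\|_{L^q}\le C_S\|\bu\|_{H^1}^2$, and the identical argument with $\bB$ in place of $\bu$ gives the second inequality. For the third inequality, note that $\bD\bB$ is a first-order constant-coefficient differential operator applied to $\bB$, so $|\bD\bB|\le c\,|\nabla\bB|$ pointwise, hence $\|\bD\bB\|_{L^2}\le c\,\|\nabla\bB\|_{L^2}\le c\,\|\bB\|_{H^1}$; enlarging $C_S$ if necessary absorbs this $c$ as well, so a single global constant $C_S$ serves all three estimates.

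The one point requiring care — and the place I expect the only genuine subtlety — is the choice of the H\"older exponent $s$ and the verification that it lies in the admissible range of the Sobolev embedding: the constraint $1<q<3/2$ is exactly what makes $s = 2q/(2-q)$ satisfy $2 < s < 6$, so the embedding $H^1\hookrightarrow L^s$ holds in $\R^3$; if $q$ were allowed up to $2$ one would hit the borderline $s=\infty$ and lose compactness/boundedness of the embedding. Beyond this, one must confirm that the Sobolev embedding constant is geometry-independent in the relevant sense (it is, for the bounded $\Omega$ with $C^2$, Liapunov, or strongly Lipschitz boundary considered here), and that the constant $C_S$ can indeed be taken uniform over the solution class rather than depending on $\|\bu\|_{H^1}$ or $\|\bB\|_{H^1}$ — which is clear since the estimates are genuinely quadratic with a fixed multiplicative constant. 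The rest is the routine expansion of the quaternionic products and bookkeeping of finitely many terms.
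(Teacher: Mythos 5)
Your proof is correct and is essentially the argument underlying the paper's own treatment: the paper simply refers to \cite{CK1}, Lemma 4.1 for the first estimate, declares the second analogous, and invokes the boundedness of $\bD:H^1(\Omega)\to L^2(\Omega)$ for the third, which is exactly the standard H\"older-plus-Sobolev-embedding reasoning (with $\tfrac1q=\tfrac12+\tfrac1s$, $2<s<6$) that you have written out in full. No gaps; your version is just the self-contained form of the cited estimate.
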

\begin{proof}
For the first estimate we refer to~\cite{CK1}, Lemma 4.1. The second estimate is analogously to the first and the last is simply the boundedness of the Dirac operator from $H^1(\Omega)$ to $L_2(\Omega)$.
 
\end{proof}

\begin{remark}
Since the $T_\Omega$-operator is a bounded operator from $L_q(\Omega)$, $1<q<3/2$, to $L_2(\Omega)$ the above statement means that we have norm estimates like
$$
\|T_\Omega \Sc(\bu \bD) \bu \|_{L^2} \leq  C \|\bu\|_{H^1}^2.
$$
\end{remark}
 
\subsection{Variational formulation}

Now we establish the variational formulation for System \ref{Eq:02}.   
The usual variational principles establish 
$$\inner{\partial_i \bu, \bv}_{L^2} = - \inner{\bu, \partial_i \bv}_{L^2}, \quad \bu \in H^1(\Omega; \hH), ~\bv \in {\stackrel{\circ}{H^1}}(\Omega; \hH), ~i=1,2,3.$$ In the case of the Dirac operator, we get 
$$\inner{\bD \bu, \bv}_{L^2} = \int_\Omega \ov{\bD \bu} ~\bv d\bx = \sum_{i,j, k} \int_\Omega ( \partial_{x_i} u_j) v_k \ov{\e_i \e_j} \e_k d\bx = - \sum_{i,j,k} \int_\Omega u_j \ov{\e_j} ( \partial_{x_i} v_k) \ov{\e_i} \e_k d\bx = \inner{\bu, \bD \bv}_{L^2},$$ as $\ov{\e_i} = -\e_i.$ Therefore, the Dirac operator is selfadjoint and has real eigenvalues.\\


First of all, we observe that 
\begin{gather*}  
\frac{1}{R_e} \inner{\bD^2  {\bf u} , \bf v}_{L^2}  =  \frac{1}{R_e} \inner{\bD  {\bf u}, \bD \bf v}_{L^2}, 
  \end{gather*} 
so that Equation (\ref{Eq:05}) in System \ref{Eq:02} becomes      
\begin{equation} \label{Eq:2.08} 
\frac{1}{R_e} \inner{ {\bf D} {\bf u}, {\bf D} {\bf v}}_{L^2}   - \inner{\Sc(\bf u \bD) {\bf u} ,  {\bf v}}_{L^2} + \inner{\bD p, \bv}_{L^2} -\frac{1}{\mu_0} \inner{\Ve((\bD \bf B) \bf B), \bv}_{L^2} =0, \quad in ~\Omega. 
\end{equation}
In a similar way, we obtain for Equation (\ref{Eq:06}) the variational formulation 
\begin{equation} \label{Eq:2.09} 
\frac{1}{R_m} \inner{ {\bf D} {\bf B}, {\bf D} {\bf w}}_{L^2}  + \inner{\Sc(\bf u  \bD) {\bf B} - \Sc (\bf B \bD) {\bf u},  {\bf w}}_{L^2} =0, \quad in ~\Omega. 
\end{equation} Moreover, due to Equation (\ref{Eq:07}) one restricts the space of functions further as
$$H^{1, div}(\Omega; \hH) = \{ \bu \in H^1(\Omega; \hH) :  \Sc (\bD \bu) = 0  \}. $$

Hence, the variational form of System \ref{Eq:02} is the following: find $\bu, \bB \in H^{1,div}(\Omega; \hH)$, and $p \in L^2(\Omega; \hH),$ such that, for all $\bv, \bw \in {\stackrel{\circ}{H^1}}(\Omega; \hH)$ we have
\begin{equation} \label{Ref:System03} \left\{
\begin{array}{c} 
\frac{1}{R_e} \inner{ {\bf D} {\bf u}, {\bf D} {\bf v}}_{L^2}   - \inner{\Sc(\bf u \bD) {\bf u} ,  {\bf v}}_{L^2} + \inner{\bD p, \bv}_{L^2} -\frac{1}{\mu_0} \inner{\Ve((\bD \bf B) \bf B), \bv}_{L^2} =0, \quad in ~\Omega \\
~\\
\frac{1}{R_m} \inner{ {\bf D} {\bf B}, {\bf D} {\bf w}}_{L^2}  + \inner{\Sc(\bf u  \bD) {\bf B} - \Sc (\bf B \bD) {\bf u},  {\bf w}}_{L^2} =0, \quad in ~\Omega. 
\end{array} \right. \end{equation}

 \subsection{Energy functional} 
From the above variational formulation we obtain, for $\bv=\bu$ and $\bw =\bB,$ the energy functional 

\begin{eqnarray}\label{Ref:System04}
J(\bu,\bB,p) & = & \frac{1}{R_e} \| {\bf D} {\bf u}\|^2_{L^2} - \Sc\left(\inner{\Sc(\bf u \bD) {\bf u} ,  {\bf u}}_{L^2} - \inner{\bD p, \bu}_{L^2} +\frac{1}{\mu_0} \inner{\Ve((\bD \bf B) \bf B), \bu}_{L^2}\right) \\
& & + \left(\frac{1}{R_m} \|  {\bf D} {\bf B}\|^2_{L^2}  + \Sc\left(\inner{\Sc(\bf u  \bD) {\bf B} - \Sc (\bf B \bD) {\bf u},  {\bf B}}_{L^2} \right)\right), \nonumber 
\end{eqnarray} with $\bu, \bB \in H^{1, div}(\Omega; \hH)$ and $p\in L^2(\Omega;\hH)$.\\
Notice that we are working in the dimensionless setting; otherwise one has to involve a constant correcting the physical units. 
Due to Equation (\ref{Eq:07}) we obtain 
$$\Sc(\inner{\bD p, \bu}_{L^2}) = \int_{\Omega} \sum_{i=1}^3 (\partial_{x_i} p ) u_i  d\bx = - \int_{\Omega} p \sum_{i=1}^3 (\partial_{x_i}  u_i ) d\bx =0,$$ since the divergence of $\bu$ is zero. 
This implies that $\bD p$ is orthogonal to the velocity of the flow $\bu$ and, furthermore, the energy functional is independent on $p$. 

In a similar way, 
\begin{gather*} 
\Sc(\inner{\Sc(\bf u \bD) {\bf u} ,  {\bf u}}_{L^2})= \int_{\Omega} \sum_{i, j=1}^3 u_i (\partial_{x_i} u_j ) u_j  d\bx = \frac{1}{2}\int_{\Omega} \sum_{i, j=1}^3 u_i (\partial_{x_i} u_j^2 )  d\bx \\= - \frac{1}{2}\int_{\Omega} \sum_{j=1}^3 (\sum_{i=1}^3 \partial_{x_i} u_i ) u_j^2   d\bx =0,
\end{gather*} again due to the divergence of $\bu$ equal zero. This leads to $J(\bu,\bB,p) = J(\bu,\bB)$ and hence 
\begin{eqnarray}
J(\bu,\bB) & = & \frac{1}{R_e} \| {\bf D} {\bf u}\|^2_{L^2}-\frac{1}{\mu_0} \Sc\left( \inner{\Ve((\bD \bf B) \bf B), \bu}_{L^2} \right) \nonumber \\
& & +  \left(\frac{1}{R_m} \|  {\bf D} {\bf B}\|^2_{L^2}+ \Sc\left(\inner{\Sc(\bf u  \bD) {\bf B} - \Sc (\bf B \bD) {\bf u},  {\bf B}}_{L^2} \right) \right). \label{Eq:EnergyFunctional}
\end{eqnarray}

In the first place notice that $J$ is a lower semi-continuous function. It is also easy to see that $J$ is Fr\'echet differentiable. This allows us to apply the Mountain Pass Theorem which we recall here for convenience, cf. for example \cite{Zeidler}:

\begin{lemma}(Mountain Pass Theorem). 
Let $X$ be a Banach space, $J:X\mapsto\mathbb{R}$ a $C^1$-functional 
satisfying the Palais-Smale condition and $J(0)=0$. Suppose
\begin{enumerate}[a)]
\item there exists $\rho,\alpha>0$ such that $J|_{\partial B_\rho}\geq\alpha$ 
where $B_\rho$ denotes the ball of radius $\rho$ centered at $0$,
\item there exists $e\in X\setminus B_\rho$ with $J(e)\leq 0$.

Then $J$ has a critical value $c\geq \alpha$ and $c$ is given by 
$$
c=\int_{\Gamma} \max_{u\in\gamma([0,1])} J(u)
$$
where $\Gamma=\{\gamma\in C([0,1],X):\gamma(0)=0,\gamma(1)=e\}$.
\end{enumerate}
\end{lemma}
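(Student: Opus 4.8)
The plan is to follow the classical deformation-theoretic route of Ambrosetti and Rabinowitz. First I would introduce the natural candidate for the critical value, namely the minimax level
$$c = \inf_{\gamma \in \Gamma} \max_{t \in [0,1]} J(\gamma(t)),$$
and verify the lower bound $c \geq \alpha$. This is the easy half of the argument: any admissible path $\gamma \in \Gamma$ joins $0$, which lies in the open ball $B_\rho$ because $J(0) = 0 < \alpha$, to the point $e \notin B_\rho$; hence by continuity of $t \mapsto \|\gamma(t)\|$ its image must meet the sphere $\partial B_\rho$ at some parameter $t_0$, and hypothesis (a) then yields $J(\gamma(t_0)) \geq \alpha$. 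Consequently $\max_t J(\gamma(t)) \geq \alpha$ for every $\gamma \in \Gamma$, so $c \geq \alpha > 0$. In particular $c$ exceeds strictly both $J(0) = 0$ and $J(e) \leq 0$, a fact that will be used to fix the endpoints in the deformation.

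The heart of the argument is to show that $c$ is actually attained as a critical value, and here I would argue by contradiction, assuming the level set at height $c$ contains no critical point of $J$. The tool is the \emph{Deformation Lemma}, which I would establish first. Because $X$ is merely a Banach space, one cannot flow along $-J'$ directly; instead I would construct a \emph{pseudo-gradient vector field}, that is, a locally Lipschitz map $V$ on the open set $\{u : J'(u) \neq 0\}$ satisfying $\|V(u)\| \leq 2\|J'(u)\|_{X^*}$ and $\langle J'(u), V(u)\rangle \geq \|J'(u)\|_{X^*}^2$; this field is produced by a partition-of-unity patching of local almost-optimal directions for the functional $J'(u)$. The Palais-Smale condition is precisely what guarantees that $\|J'\|$ is bounded away from zero on a neighbourhood of the level $c$: were this not the case, a sequence with $J(u_n) \to c$ and $J'(u_n) \to 0$ would, by the Palais-Smale condition, subconverge to a critical point at level $c$, contradicting our assumption. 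With a uniform lower bound $\|J'\| \geq \delta > 0$ on an annular region $\{c - \epsilon \leq J \leq c + \epsilon\}$ in hand, the flow $\eta(s, u)$ of the suitably cut-off and normalized negative pseudo-gradient field decreases $J$ at a definite rate, so that after finite time $s = 1$ it carries the sublevel set $\{J \leq c + \epsilon\}$ into $\{J \leq c - \epsilon\}$ while leaving fixed every point at which $J$ is already below $c - \epsilon$.

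To close the argument I would then exploit the minimax characterization of $c$. By the definition of $c$ as an infimum, I pick a path $\gamma \in \Gamma$ with $\max_t J(\gamma(t)) < c + \epsilon$ and compose it with the deformation to form $\tilde\gamma(t) = \eta(1, \gamma(t))$. Since $J(0), J(e) < c - \epsilon$, the endpoints are left fixed by the flow, so $\tilde\gamma(0) = 0$ and $\tilde\gamma(1) = e$, giving $\tilde\gamma \in \Gamma$; yet the deformation forces $\max_t J(\tilde\gamma(t)) \leq c - \epsilon$, contradicting $c = \inf_{\gamma \in \Gamma} \max_t J(\gamma(t))$. Hence no such deformation can exist, the level $c$ must contain a critical point, and this point realizes the asserted critical value $c \geq \alpha$. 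The step I expect to be the main obstacle is the construction of the pseudo-gradient field together with the verification that its flow simultaneously decreases $J$ fast enough and respects the lower sublevel sets; by contrast the intermediate-value argument for $c \geq \alpha$ and the final contradiction via the minimax level are routine once the Deformation Lemma is in place.
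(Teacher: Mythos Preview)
Your argument is the standard Ambrosetti--Rabinowitz proof and is correct as outlined (including your tacit correction of the displayed formula for $c$: the paper writes $\int_{\Gamma}$ where $\inf_{\gamma\in\Gamma}$ is clearly intended). There is nothing to compare it against, however, because the paper does not prove this lemma at all. The Mountain Pass Theorem is merely quoted ``for convenience'' with a reference to Zeidler's monograph \cite{Zeidler}; it serves as a black-box tool in the subsequent proof of Theorem~1. So your proposal goes well beyond what the paper itself supplies, and the deformation-lemma route you sketch is exactly the one found in the cited reference.
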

Using this Mountain Pass Theorem we are able to establish the main theorem of this section:
\begin{theorem}
The energy functional $J(\bu,\bB)$ has at least one minimum. 
\end{theorem}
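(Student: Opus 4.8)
The plan is to produce a minimizer by the direct method of the calculus of variations, assembling the three standard ingredients: a reflexive ambient space, coercivity of $J$, and its weak lower semicontinuity. First I would homogenize the boundary datum. Since $\bu$ vanishes on $\Gamma$ while $\bB|_\Gamma=\bh$, I fix once and for all a divergence-free extension $\tilde\bB\in H^1(\Omega;\hH)$ with $\tilde\bB|_\Gamma=\bh$ and $\Sc(\bD\tilde\bB)=0$ (such an extension exists under the natural flux compatibility $\int_\Gamma \bh\cdot n\,dS=0$), and write $\bB=\bB_0+\tilde\bB$ with $\bB_0\in\stackrel{\circ}{H^1}(\Omega;\hH)\cap\ker\Sc(\bD\,\cdot)$. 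Then $J$ becomes a functional of $(\bu,\bB_0)$ ranging over the Hilbert space $V:=\big(\stackrel{\circ}{H^1}(\Omega;\hH)\cap\ker\Sc(\bD\,\cdot)\big)^2$, which, being a closed subspace of a Sobolev space, is reflexive. As already noted before the statement, $J$ is Fr\'echet differentiable, so a minimizer will automatically be a critical point and hence a weak solution of System~\ref{Eq:02}.

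For coercivity I would first exploit the divergence constraints and the condition $\bu|_\Gamma=0$ exactly as in the two computations preceding (\ref{Eq:EnergyFunctional}): these already yield $\Sc(\inner{\bD p,\bu}_{L^2})=0$, $\Sc(\inner{\Sc(\bu\bD)\bu,\bu}_{L^2})=0$, and, by the same argument, $\Sc(\inner{\Sc(\bu\bD)\bB,\bB}_{L^2})=0$. The only genuinely cubic contributions that survive are the magnetic couplings $\Sc(\inner{\Ve((\bD\bB)\bB),\bu}_{L^2})$ and $\Sc(\inner{\Sc(\bB\bD)\bu,\bB}_{L^2})$. The leading quadratic part $\tfrac{1}{R_e}\|\bD\bu\|_{L^2}^2+\tfrac{1}{R_m}\|\bD\bB\|_{L^2}^2$ controls the full norm on $V$, since for functions with vanishing trace one has the Friedrichs identity $\|\bD\bu\|_{L^2}^2=\|\nabla\bu\|_{L^2}^2$, and together with the Poincar\'e inequality this gives $\|\bD\bu\|_{L^2}\ge c\,\|\bu\|_{H^1}$, and likewise for $\bB_0$. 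The surviving cubic terms are then estimated by the preparatory norm estimates (the Poisson-constant lemma) together with the Sobolev embedding $H^1(\Omega)\hookrightarrow L^4(\Omega)$, giving a bound of the shape $C\big(1+\tfrac{1}{\mu_0}\big)\|\bu\|_{H^1}\|\bB\|_{H^1}^2$.

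This last estimate is the point I expect to be the main obstacle. Because the coupling is cubic while the coercive part is only quadratic, an unrestricted bound does not yield coercivity on all of $V$: absorbing $\|\bu\|_{H^1}\|\bB\|_{H^1}^2$ into $\tfrac{1}{R_e}\|\bu\|_{H^1}^2+\tfrac{1}{R_m}\|\bB\|_{H^1}^2$ by Young's inequality leaves a quartic remainder in $\|\bB\|_{H^1}$. Coercivity (equivalently, boundedness below together with $J\to\infty$) must therefore be extracted in the regime where the quadratic part dominates, that is, under a smallness condition on the datum $\bh$ (which fixes $\|\tilde\bB\|_{H^1}$) and on $C_1=1/\lambda_{\min}(-\Delta)$ from (\ref{estim2}) relative to the Reynolds numbers, or else by restricting the minimization to a suitable closed bounded subset of $V$. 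Making this quantitative, with the explicit constants $C_1$ and $C_S$, is the technical heart of the argument.

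Granting coercivity, weak lower semicontinuity is the remaining and comparatively routine ingredient. The quadratic terms $\|\bD\bu\|_{L^2}^2$ and $\|\bD\bB\|_{L^2}^2$ are continuous and convex, hence weakly sequentially lower semicontinuous. The cubic terms are in fact weakly \emph{continuous}: by the Rellich--Kondrachov theorem a sequence $(\bu_n,\bB_{0,n})\rightharpoonup(\bu,\bB_0)$ in $V$ converges strongly in $L^4(\Omega;\hH)$, while the single derivative factor ($\bD\bB_n$, respectively $\nabla\bu_n$) converges only weakly in $L^2$, and a product of two strongly $L^4$-convergent factors with one weakly $L^2$-convergent factor passes to the limit. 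Hence $J$ is weakly lower semicontinuous on $V$. The direct method then concludes: I take a minimizing sequence, use coercivity to bound it in $V$, extract a weakly convergent subsequence by reflexivity, and invoke weak lower semicontinuity to see that the weak limit attains the infimum, giving the asserted minimum.
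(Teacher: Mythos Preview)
Your plan differs fundamentally from the paper's. The paper does \emph{not} use the direct method; it applies the Mountain Pass Theorem. It verifies the two geometric hypotheses: (i) for a sufficiently small radius $\rho$ one has $J|_{\partial B_\rho}\ge\alpha>0$ (this is where the estimate you also reach, namely a lower bound of the form $\frac{C_S}{R_e}\|\bu\|_{H^1}^2+\frac{C_S}{R_m}\|\bB\|_{H^1}^2-(2+\frac{1}{\mu_0})\|\bB\|_{H^1}^2\|\bu\|_{H^1}$, is used together with $\|\bB\|_{H^1}\le\rho$); and (ii) for some $(\bu_0,\bB_0)$ far from the origin, $J(\bu_0,\bB_0)<0$. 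The existence of a critical value $c\ge\alpha$ then follows from Mountain Pass, and this is what the paper calls a ``minimum''.

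The gap in your approach is exactly the one you flag, but it is worse than a technicality: $J$ is genuinely \emph{not} coercive and \emph{not} bounded below, independently of the size of $\bh$. The cubic terms that survive, e.g.\ $\Sc\langle\Sc(\bB\bD)\bu,\bB\rangle_{L^2}$, are cubic in the \emph{unknowns} $(\bu,\bB_0)$, not merely in the datum; scaling $(\bu,\bB_0)\mapsto(t\bu,t\bB_0)$ shows the cubic part dominates the quadratic one as $t\to\infty$, driving $J\to-\infty$ along suitable directions. So no smallness condition on $\bh$ or on $C_1$ can rescue global coercivity, and hence the direct method on all of $V$ cannot produce a minimizer. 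Your fallback of minimizing on a closed ball does yield a minimizer by weak lower semicontinuity, but to conclude it lies in the interior you would need precisely the paper's estimate $J|_{\partial B_\rho}\ge\alpha>J(0,0)$; at that point you have recovered the Mountain Pass geometry, and the resulting interior minimizer is just the trivial local minimum at the origin, whereas the paper's Mountain Pass argument is designed to produce a \emph{nontrivial} critical point with $J=c\ge\alpha>0$.
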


\begin{proof}
Let us consider the energy functional $J(\bu,\bB)$ 


Now consider $({\bf u},{\bf B})$. Again note that $p \in L^2(\Omega; \hH)$ has already been eliminated. The boundary of the ball of radius $\rho$ 
thus is equivalent to 
$$
\|\bu\|^2_{H^1} + \|\bB\|^2_{H^1}  = \rho^2. 
$$
For subsequential purposes we note that 
we always can rely on 
\begin{equation}
\label{boundednessBbyrho}
\|\bB\|_{H^1} \le \rho, \quad \|\bu\|_{H^1} \le \rho.
\end{equation} 
To be able to apply the mountain pass theorem we have to show that 
$$
J(\bu,\bB) \ge \alpha \qquad \mbox{with} \quad \|\bu\|^2_{H^1} + \|\bB\|^2_{H^1}  = \rho^2.
$$  
Now, equation (\ref{Eq:EnergyFunctional}) is equivalent to 
$$
J(\bu,\bB) = \frac{1}{R_e} \|{\bf D}\bu\|^2_{L^2} - \frac{1}{\mu_0} \Sc(\langle \Ve(({\bf D}\bB)\bB),\bu \rangle_{L^2}) 
+ \Bigg(        
\frac{1}{R_m} \|{\bf D}\bB\|^2_{L^2}
+\langle 
\Sc(\bu\bD)\bB-\Sc(\bB \bD)\bu,\bB
\rangle_{L^2}
\Bigg)
$$
Furthermore, from the Poincar\'e inequalities $\|\bD\bu\|^2_{L^2} \le C_S \|\bu\|^2_{H^1}$ and $\|\bD \bB\|^2_{L^2} \le C_S \|\bB\|^2_{H^1}$ with  the same Poincar\'e constant $C_S$ which get the estimate
\begin{eqnarray*}
J(\bu,\bB) & \ge & \frac{1}{R_e} C_S \|\bu\|^2_{H^1} + \frac{1}{\mu_0} \|\bB\|^2_{H^1}  \|\bu\|_{H^1} + \frac{1}{R_m} C_S \|\bB\|^2_{H^1}- 2(\|\bu\|_{H^1} \|\bB\|^2_{H^1}) \\
& = & \frac{1}{R_e} C_S \|\bu\|^2_{H^1} + \frac{1}{R_m} C_S \|\bB\|^2_{H^1} - (2+\frac{1}{\mu_0})\|\bB\|^2_{H^1} \|\bu\|_{H^1}
\end{eqnarray*}
To show that 
\begin{equation}\label{estimateJ}
\frac{1}{R_e} C_S \|\bu\|^2_{H^1} + \frac{1}{R_m} C_S \|\bB\|^2_{H^1} - (2+\frac{1}{\mu_0})\|\bB\|^2_{H^1} \|\bu\|_{H^1} \ge \alpha
\end{equation}
we next use the identity 
$$
\|\bu\|_{H^1} \|\bB\|_{H^1} = \frac{1}{2} \Bigg(        
\|\bu\|^2_{H^1}+\|\bB\|^2_{H^1}-(\|\bu\|_{H^1}-\|\bB\|_{H^1})^2
\Bigg).
$$
Therefore we can estimate the expression on the left-hand side of inequality (\ref{estimateJ}) by 
\begin{eqnarray*}
& & \frac{1}{R_e} C_S \|\bu\|^2_{H^1}  + \frac{1}{R_m} C_S \|\bB\|^2_{H^1} 
- (2+\frac{1}{\mu_0}) \|\bB\|_{H^1} \cdot \Bigg(
\frac{1}{2} \{ \|\bu\|^2_{H^1}+ \|\bB\|^2_{H^1}-(\|\bu\|_{H^1}- \|\bB\|_{H^1})^2\}
\Bigg)\\
&\ge & \Bigg[ \frac{C_S}{R_e} - \frac{1}{2}(2+\frac{1}{\mu_0})\|\bB\|_{H^1} \Bigg] \|\bu\|^2_{H^1} 
+ \Bigg[  \frac{C_S}{R_m} - \frac{1}{2}(2 + \frac{1}{\mu_0}) \|\bB\|_{H^1}  \Bigg] \|\bB\|^2_{H^1}\\
&+& \underbrace{\frac{1}{2}(2+\frac{1}{\mu_0}) \|\bB\|_{H^1} (\|\bu\|_{H^1}-\|\bB\|_{H^1})^2}_{>0}.
\end{eqnarray*}  
The third term is positive and hence it can be dropped. Therefore, we finally arrived at 
\begin{equation}\label{estimatefinalJ}
J(\bu,\bB) \ge \Bigg[\frac{C_S}{R_e} - \frac{1}{2}(2 + \frac{1}{\mu_0})\|\bB\|_{H^1} \Bigg] \|\bu\|^2_{H^1} + 
\Bigg[\frac{C_S}{R_m} - \frac{1}{2}(2 + \frac{1}{\mu_0})\|\bB\|_{H^1} \Bigg] \|\bB\|^2_{H^1} 
\end{equation}
Using that $\|\bB\|_{H^1} \le \rho$ we obtain that 
$$
J(\bu,\bB) \ge \Bigg[\frac{C_S}{R_e} - \frac{1}{2}(2 + \frac{1}{\mu_0})\rho \Bigg] \|\bu\|^2_{H^1} + 
\Bigg[\frac{C_S}{R_m} - \frac{1}{2}(2 + \frac{1}{\mu_0}) \rho \Bigg] \|\bB\|^2_{H^1} 
$$
This means that for $\rho$ satisfying  $\rho < \frac{\min\{\frac{C_S}{R_e},\frac{C_S}{R_m}\}}{1+\frac{1}{2\mu_0}}$ we have 
$$
J(\bu,\bB) \ge \alpha(\rho) > 0.
$$
In fact, we here get a condition on how large the radius $\rho$ may be chosen. 

Choosing $(\bu_0,\bB_0)$ sufficiently far away from the origin we may find $J(\bu_0,\bB_0)<0$ and the conditions for the mountain pass theorem are fulfilled. This means that the functional $J(\bu,\bB)$ has at least one minimum. 
\end{proof}

We also get the following statement.  
\begin{corollary}
If 
$$
\|\bB\|_{H^1} < \frac{\min\{\frac{C_S}{R_e},\frac{C_S}{R_m}\}}{1+\frac{1}{2\mu_0}} 
$$
then $J(\bu,\bB)$ is coercive.  
\end{corollary}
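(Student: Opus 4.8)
The plan is to read the corollary straight off inequality (\ref{estimatefinalJ}), which was already derived in the course of proving the preceding theorem, and to observe that the hypothesis on $\|\bB\|_{H^1}$ is exactly the condition that makes both square--bracket coefficients in (\ref{estimatefinalJ}) strictly positive. First I would record the elementary equivalence
$$\frac{C_S}{R_e} - \frac{1}{2}\Bigl(2+\frac{1}{\mu_0}\Bigr)\|\bB\|_{H^1} > 0 \iff \|\bB\|_{H^1} < \frac{C_S/R_e}{1+\frac{1}{2\mu_0}},$$
together with its analogue with $R_m$ in place of $R_e$. Taking the smaller of the two right--hand sides shows that the assumption $\|\bB\|_{H^1} < \dfrac{\min\{C_S/R_e,\,C_S/R_m\}}{1+\frac{1}{2\mu_0}}$ forces \emph{both} coefficients appearing in (\ref{estimatefinalJ}) to be positive at once.

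Granting this, I would set $\delta := \min\bigl\{\frac{C_S}{R_e},\frac{C_S}{R_m}\bigr\} - \frac{1}{2}\bigl(2+\frac{1}{\mu_0}\bigr)\|\bB\|_{H^1}$, which is strictly positive by hypothesis, and feed it into (\ref{estimatefinalJ}) to obtain
$$J(\bu,\bB) \ge \delta\bigl(\|\bu\|^2_{H^1} + \|\bB\|^2_{H^1}\bigr) = \delta\,\bigl\|(\bu,\bB)\bigr\|^2_{H^1\times H^1}.$$
Since the right--hand side tends to $+\infty$ as $\|(\bu,\bB)\|_{H^1\times H^1}\to\infty$, this is precisely the coercivity of $J$; combined with the lower semicontinuity of $J$ noted earlier and the reflexivity of $H^1\times H^1$, the direct method of the calculus of variations then also re-yields the existence of a minimizer on the admissible region, consistent with Theorem~2. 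Essentially no new analytic input is needed beyond (\ref{estimatefinalJ}); the work is the algebraic bookkeeping of identifying the threshold.

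The one point that I would be careful to spell out — and which I regard as the only genuine subtlety — is that the coefficient $\frac{1}{2}(2+\frac{1}{\mu_0})\|\bB\|_{H^1}$ in (\ref{estimatefinalJ}) depends on the variable $\bB$ itself. Hence the quadratic lower bound above, and with it the coercivity being asserted, is to be understood relative to the constraint set $\{(\bu,\bB) : \|\bB\|_{H^1} < \text{threshold}\}$ (equivalently, as coercivity in the $\bu$--variable for each fixed admissible $\bB$), not as joint coercivity on all of $H^{1,div}\times H^{1,div}$: once $\|\bB\|_{H^1}$ is allowed to exceed the threshold the brackets in (\ref{estimatefinalJ}) turn negative and the growth estimate breaks down. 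Making this domain of validity explicit, and checking that $\delta>0$ on it, is the whole content of the argument.
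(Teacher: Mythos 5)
Your proposal is correct and follows essentially the same route as the paper, whose entire proof is the one-line observation that the corollary follows directly from (\ref{estimatefinalJ}) because the stated bound on $\|\bB\|_{H^1}$ is exactly what makes both bracketed coefficients positive. Your added bookkeeping (the explicit threshold equivalence, the constant $\delta$, and the caveat that the bound is to be read on the constraint set where $\|\bB\|_{H^1}$ stays below the threshold) simply spells out what the paper leaves implicit.
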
 This corollary means that we could have got our result also from the Lemma of Lax-Milgram. 
This statement follows directly from (\ref{estimatefinalJ}) and it is a sufficient condition for coercivity. 
\begin{remark}
In the pure Navier-Stokes case (where $\bB={\bf 0}$) the energy functional reduces to 
$$
J(\bu) = \frac{1}{R_e} \|\bD \bu\|^2_{L^2}.
$$
In this case we always deal with a coercive system. We have $J(\bu)= 0$ if and only if $\bu \in {\rm ker}\bD$ and $J(\bu) > 0$ if and only if $\bD \bu \neq0$. The function theoretic property of being quaternionic monogenic is hence equivalent to the property that the energy functional $J$ vanishes. 
\end{remark}  

\section{Solvability via Schauder's fixed point theorem}
The previous calculations allowed us to study the solvability of the system in the form 
$$
J(\bu,\bB)\rightarrow\min.
$$

%
%


Here, we can also use our variational formulation together with Schauder's fixed point theorem whose outcome will be our Theorem 2 presented at the end of this section representing the main result of this section. To this end we remark that our variational formulation can be considered in the form {\it to  find 
$({\bf u},{\bf B},p)$} such that 
\begin{eqnarray*}
\int_\Omega \left( \overline{\bD \bu}\; \bD \bv +  \overline{\mathrm{Sc}(\bu \bD) \bu}\; \bv +p (\bD \bv)-\frac{1}{\mu_0}\overline{\mathrm{Vec}\big((\bD\bB)  \bB\big) }\; \bv \right) d \bx & = & 0\\
\int_\Omega \left( \overline{\bD\bB}\; \bD \bw  +  \overline{\mathrm{Sc}(\bu\bD)\bB}\;  \bw +\overline{\mathrm{Sc}(\bB\bD)\bu}\;  \bw \right) d \bx & = & 0.
\end{eqnarray*}
for all $\bv\in {\stackrel{\circ}{H^1}}(\Omega; \hH)$ and $\bw \in {\stackrel{\circ}{H^1}}(\Omega; \hH) \cap {\rm ker}\;{\rm div}$.  Note again that div$\bB = 0$ and div$\bw=0$. So we have $\bD\bB = {\rm rot} \bB$ and  $\bD\bw = {\rm rot} \bw$ in $\Omega.$ 

 Also we remark that   
$$\int_\Omega  p (\bD \bv) d \bx = \int_{\partial \Omega}  p {\boldsymbol n} \cdot \bv dS - \int_\Omega  (\bD p) \bv d \bx = 0$$
since $\bD p = $grad$(p)$ is orthogonal to $\bv.$ 

The linearized problem is now:  Given $(\tilde{\bu}, \tilde{\bB})$ we want to find $(\bu,\bB)$ such that
\begin{eqnarray}\label{MHDOseen1}
\int_\Omega \left( \overline{\bD \bu}\; \bD \bv +  \overline{\mathrm{Sc}(\tilde\bu \bD) \bu}\; \bv  \right) d \bx & = & \int_\Omega\frac{1}{\mu_0}\overline{\mathrm{Vec}\big((\bD\tilde \bB)\bB\big)}  \; \bv\; d \bx  \\ \label{MHDOseen2}
\int_\Omega \left( \overline{\bD\bB}\; \bD \bw  +  \overline{\mathrm{Sc}(\tilde \bu\bD)\bB}\;  \bw \right) d \bx & = & - \int_\Omega\overline{\mathrm{Sc}(\tilde \bB\bD)\bu}\;  \bw d \bx
\end{eqnarray}
for all $(\bv,\bw)$.

From~\cite{AML}, Lemma 2.1, we have
\begin{gather*}
\int_\Omega \overline{\mathrm{Sc}(\tilde\bu \bD) \bv}\; \bv\;  d \bx =  0, \quad 
\int_\Omega  \overline{\mathrm{Sc}(\tilde \bu\bD)\bw}\;  \bw\; d \bx = 0
\end{gather*}
such that the Lemma of Lax-Milgram gives us the unique solvability with the following norm estimates:
\begin{gather*}
\| \bu \|_{H^1}  \leq  \frac{1}{C_u}  \| \mathrm{Vec}\big( (\bD \tilde \bB)  \bB\big) \|_{H^{-1}}, \quad
\|\bB \|_{H^1}  \leq  \frac{1}{C_u} \|\mathrm{Sc}(\tilde \bB\bD)\bu\|_{H^{-1}}. 
\end{gather*}
where we have $C_u$ being the coercivity constant over the domain $\Omega$, i.e. 
$$
\left| \int_\Omega  \overline{\bD \bu}\; \bD \bu \; d\bx\right| \geq C_u \|\bu\|_{H^1}
$$ 

\par\medskip\par
Let us now consider the mapping $G: {\stackrel{\circ}{H^1}}(\Omega)\times H^1(\Omega)\to {\stackrel{\circ}{H^1}}(\Omega)\times H^1(\Omega)$, s.t $(\tilde{\bu},\tilde{\bB})\mapsto (\bu,\bB)$, 
which maps $(\tilde{\bu},\tilde{\bB})$ to the solution $(\bu,\bB)$ of 
(\ref{MHDOseen1})-(\ref{MHDOseen2}). 
\vspace{1cm}

 It is easily seen that $G$ is a continuous mapping. Schauder's fixed point theorem ensures the existence of a fixed point  $(\tilde{\bu},\tilde{\bB})= (\bu,\bB)$ if we guarantee that taken $(\tilde{\bu},\tilde{\bB})$ from a bounded convex set $\mathbb K$ we have that $G(\mathbb K)$ is compact. The map's continuity ensures the boundedness of $G(\mathbb K)$ while proving $({\bf u}, {\bf B}) \in H^{1+\epsilon}, ~\epsilon >0,$ implies compactness of $G(\mathbb K)$ due to the compact embedding of $H^{1+\epsilon}$ in $H^{1}.$
 
 \par\medskip\par
To prove that such a bounded convex set $\mathbb K$ does exist we need the equations in the $TQT$-form, but the fixed point iteration is constructed as a modification and in a different way   
from that considered in \cite{KraAACA2014}.
\begin{eqnarray}
\bu &=& \frac{R_e^2}{\mu_0} TQT[(\Ve(\bD \tilde{\bB})\bB) -\Sc(\tilde{\bu}\bD) \bu] - R_e^2 TQT 
\bD p \label{u-modified}\\
\bB &=& R_m^2 TQT[\Sc(\tilde{\bB}\bD)\bu - \Sc(\tilde{\bu}\bD)\bB] \label{B-modified}\\
\Sc(Qp) &=& \frac{1}{\mu_0} \Sc[QT(\Ve(\bD\tilde{\bB})\bB)-\Sc(\tilde{\bu}\bD)\bu]. \label{p-modified}
\end{eqnarray}
For simplicity we here assume that ${\bf h} = {\bf 0}$, otherwise one adds the term $F_{\partial \mathbb K} {\bf h} + T_{\mathbb K} {\bf P}_{\mathbb K} {\bf H}$ to the solution for ${\bf B},$ where ${\bf H} \in H^2(\mathbb K)$ is an extension of $\bf h$. 

Next,  the system above can be rewritten in the following way
\begin{eqnarray}
\bu &=& \big[I+\frac{R_e^2}{\mu_0} TQT\big(\Sc(\tilde{\bu}\bD)\big)\big]^{-1}R_e^2 TQT  \Big[\frac{1}{\mu_0} \Ve(\bD\tilde{\bB})\bB - {\bD}p \Big] \label{equreformulated}\\
\bB&=& \big[I+ R_m^2 TQT\big(\Sc(\tilde{\bu}\bD)\big)\big]^{-1} R_m^2 TQT \big(\Sc(\tilde{\bB}\bD) \big)\bu \label{eqbreformulated}. 
\end{eqnarray}
Equations (\ref{equreformulated}) and (\ref{eqbreformulated}) can be rewritten in form of Neumann series 
$$
\bu =  \Bigg(\sum\limits_{n=0}^{+\infty}\big[ -\frac{R_e^2}{\mu_0} TQT\big(\Sc(\tilde{\bu}\bD)\big) \big]^n \Bigg) R_e^2 TQT  \Big[ \frac{1}{\mu_0} \Ve(\bD\tilde{\bB})\bB - {\bD}p \Big],
$$ whenever $\left\| \frac{R_e^2}{\mu_0}  TQT \Sc(\tilde{\bu}\bD) \right\| = q_1 < 1,$ and 
$$
\bB = \Bigg(\sum\limits_{n=0}^{+\infty}\big[-R_m^2 TQT \big( \Sc(\tilde{\bu}\bD) \big) \big]^n  \Bigg) R_m^2 TQT \big( \Sc(\tilde{\bB}\bD \big) \bu,
$$
whenever $\left\| R_m^2  TQT \Sc(\tilde{\bu}\bD) \right\| = q_2 < 1.$ 

Further let us put $ k:= \|TQT\| \le \frac{1}{\lambda_{min}(-\Delta)}$. 

Then these two conditions are satisfied if $\|\tilde{\bu}\|_{H^1} \le \min\{  
\mu/(Re^2 k  C_D), 1/(R_m^2 k C_D),
\}$ 
where $C_D$ is the operator 
norm of the Dirac operator.

From these representations we may directly infer that 
\begin{eqnarray*}
\|\bu\|_{H^1} & \le &  \sum\limits_{n=0}^{+\infty} \left\| \frac{R_e^2}{\mu_0} TQT \big(\Sc(\tilde{\bu}\bD)\big) \right\|^n \left( \left\|  \frac{R_e^2}{\mu_0} TQT   \Ve(\bD\tilde{\bB}) \right\| \|\bB\|_{H^1} + \left\|  R_e^2 TQT  \right\| \| {\bD}p \|_{H^1}  \right) \\ 
& \le &  \frac{\left\|  \frac{R_e^2}{\mu_0} TQT   \Ve(\bD\tilde{\bB}) \right\| }{1-\left\| \frac{R_e^2}{\mu_0} TQT \big(\Sc(\tilde{\bu}\bD)\big) \right\| }  \|\bB\|_{H^1} + \frac{ \|  R_e^2 TQT \|}{1-\left\| \frac{R_e^2}{\mu_0} TQT \big(\Sc(\tilde{\bu}\bD)\big) \right\| }  \| {\bD}p \|_{H^1}  
\end{eqnarray*}
while
\begin{eqnarray*}
\|\bB\|_{H^1} & \le & \sum\limits_{n=0}^{+\infty} \|R_m^2 TQT \Sc(\tilde{\bu} \bD)\|^n \|R_m^2 TQT \Sc(\tilde{\bB} \bD)\| \|\bu\|_{H^1} \\
 &=& \frac{\|R_m^2 TQT \Sc(\tilde{\bB} \bD)\|}{1-\|R_m^2 TQT \Sc(\tilde{\bu}\bD)\|} \|\bu\|_{H^1}
\end{eqnarray*}
Next  we can estimate 
$$
\|\bu\|_{H^1} \le \frac{R_e^2 k \|\Ve(\bD\tilde{\bB})\|}{\mu_0-R_e^2 k \|\Sc(\tilde{\bu} {\bD})  \|} \|\bB\|_{H^1} + \frac{\mu_0 R_e^2 k  }{\mu_0-R_e^2 k \|\Sc(\tilde{\bu}{\bD})  \|} \|\bD p\|_{H^1}, 
$$ and 
$$\|\bB\|_{H^1} \le \frac{R_m^2 k \|\Sc(\tilde{\bB}\bD)\|}{1-R_m^2 k \|\Sc(\tilde{\bu} {\bD})  \|} \|\bu\|_{H^1}.
$$
The latter equation actually implies that there is a constant $C > 0$ such that 
$$
\|\bB\|_{H^1} \le C(\|\tilde{\bu}\|_{H^1},\|\tilde{\bB}\|_{H^1}) \|\bu\|_{H^1}.
$$
Since we have 
$$\|  \Ve(\bD\tilde{\bB})  \bB  \|_{L^q} < +\infty, \qquad \mbox{\rm for }1< q < 3/2, $$ 
from Theorem 6.1. in \cite{BH} we have $\tilde{\bB} \in H^{1+\epsilon}$, $\epsilon > 0$.  
This ensures the necessary embedding of $(\bu, \bB) \in H^{1+\epsilon},$ for small enough $\epsilon >0,$ and concludes the compactness argument for $G(\mathbb{K}).$

We just have proved our main result of this section: 

\begin{theorem}
Let $\|\tilde{\bu}\|_{H^1} \le \min\{  
\mu/(Re^2 k C_D), 1/(R_m^2 k C_D)
\}$ then problem  (\ref{equreformulated}) and (\ref{eqbreformulated}) has a solution.
\end{theorem}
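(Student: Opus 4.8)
The plan is to assemble the pieces developed in this section into a single application of Schauder's fixed point theorem to the map $G$ defined by $(\tilde{\bu},\tilde{\bB}) \mapsto (\bu,\bB)$ via the iterated system \eqref{equreformulated}--\eqref{eqbreformulated}. First I would fix the radius $R>0$ and set $\mathbb{K} = \{(\tilde{\bu},\tilde{\bB}) \in {\stackrel{\circ}{H^1}}(\Omega;\hH) \times H^{1,div}(\Omega;\hH) : \|\tilde{\bu}\|_{H^1} \le R,\ \|\tilde{\bB}\|_{H^1} \le R\}$, choosing $R$ no larger than $\min\{\mu/(R_e^2 k C_D), 1/(R_m^2 k C_D)\}$ so that the two Neumann-series conditions $q_1<1$ and $q_2<1$ hold for every $(\tilde{\bu},\tilde{\bB}) \in \mathbb{K}$; this guarantees the operators $[I + \frac{R_e^2}{\mu_0} TQT(\Sc(\tilde{\bu}\bD))]^{-1}$ and $[I + R_m^2 TQT(\Sc(\tilde{\bu}\bD))]^{-1}$ are well defined and bounded, so $G$ maps $\mathbb{K}$ into ${\stackrel{\circ}{H^1}} \times H^1$ and is well defined on $\mathbb{K}$. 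The set $\mathbb{K}$ is manifestly bounded, closed and convex in the Hilbert module ${\stackrel{\circ}{H^1}} \times H^1$.

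Next I would verify the \emph{self-mapping} property $G(\mathbb{K}) \subseteq \mathbb{K}$. This is exactly what the norm estimates in the section produce: from $\|\bB\|_{H^1} \le C(\|\tilde{\bu}\|_{H^1},\|\tilde{\bB}\|_{H^1}) \|\bu\|_{H^1}$ and the companion estimate for $\|\bu\|_{H^1}$ (with the pressure term $\|\bD p\|_{H^1}$ controlled via \eqref{p-modified} and Lemma~2, which bounds $\|TQT\|$), one gets a closed system of inequalities for $(\|\bu\|_{H^1},\|\bB\|_{H^1})$ whose right-hand sides are monotone in $R$ and vanish (in the $p=0$ case, $\bh=0$) as the data shrink; choosing $R$ small enough — possibly shrinking it further than the Neumann-series threshold — forces $\|\bu\|_{H^1} \le R$ and $\|\bB\|_{H^1}\le R$. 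Continuity of $G$ on $\mathbb{K}$ follows from the fact that each building block — $T$, $Q$, the multiplication operators $\Sc(\tilde{\bu}\bD)$ and $\Ve(\bD\tilde{\bB})$, and the Neumann series, which converges uniformly on $\mathbb{K}$ since $q_1,q_2$ are bounded away from $1$ there — depends continuously on $(\tilde{\bu},\tilde{\bB})$; this also re-confirms boundedness of $G(\mathbb{K})$.

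For \emph{compactness} of $G(\mathbb{K})$ I would invoke the regularity bootstrap already flagged: since $\|\Ve(\bD\tilde{\bB})\bB\|_{L^q} < \infty$ for $1<q<3/2$ (Lemma~3 together with the Remark on $T_\Omega : L^q \to L^2$), Theorem~6.1 of \cite{BH} promotes the solution to $H^{1+\epsilon}$ for some $\epsilon>0$, and the Rellich--Kondrachov compact embedding $H^{1+\epsilon}(\Omega) \hookrightarrow\hookrightarrow H^1(\Omega)$ then makes $G(\mathbb{K})$ relatively compact in ${\stackrel{\circ}{H^1}} \times H^1$. With $\mathbb{K}$ bounded, closed, convex and $G : \mathbb{K} \to \mathbb{K}$ continuous and compact, Schauder's fixed point theorem yields $(\tilde{\bu},\tilde{\bB}) = (\bu,\bB)$, i.e. a solution of \eqref{equreformulated}--\eqref{eqbreformulated}, which unwinds to a weak solution of System~\ref{Eq:02}.

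The main obstacle I anticipate is the self-mapping step rather than the abstract fixed-point machinery: one must check that the radius $R$ can be chosen simultaneously small enough for $G(\mathbb{K}) \subseteq \mathbb{K}$ \emph{and} in a way consistent with the Neumann-series invertibility thresholds $\mu/(R_e^2 k C_D)$ and $1/(R_m^2 k C_D)$ — in particular tracking how the constant $C(\|\tilde{\bu}\|_{H^1},\|\tilde{\bB}\|_{H^1})$ and the $\|\bD p\|_{H^1}$ contribution scale with $R$, so that the coupled inequalities genuinely close. A secondary technical point is ensuring the regularity estimate from \cite{BH} is uniform over $\mathbb{K}$ (so that the $H^{1+\epsilon}$-bound, and hence relative compactness, holds for the whole image and not just pointwise), which requires that the $L^q$-bound on $\Ve(\bD\tilde{\bB})\bB$ be uniform on $\mathbb{K}$ — this follows from Lemma~3 and the definition of $\mathbb{K}$, but should be stated explicitly.
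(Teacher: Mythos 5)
Your plan follows essentially the same route as the paper: a fixed-point map $G$ built from the linearized system, the $TQT$/Neumann-series estimates under the stated smallness condition on $\tilde{\bu}$ to obtain the bounded convex set, the $L^q$-bound on $\Ve(\bD\tilde{\bB})\bB$ together with Theorem~6.1 of \cite{BH} to get $H^{1+\epsilon}$-regularity and hence compactness of $G(\mathbb{K})$, and Schauder's theorem to conclude. If anything, you are more explicit than the paper about the self-mapping requirement $G(\mathbb{K})\subseteq \mathbb{K}$ and the uniformity of the regularity bound over $\mathbb{K}$, points the paper's own argument leaves implicit.
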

We remark that the condition of the theorem also implies the boundedness of 
$\tilde{\bB}$ although the theory does not provide uniqueness. To get also 
uniqueness we have to apply stronger fixed point theorems such as the 
Banach fixed point theorem which is summarized in the following appendix providing an 
offrounded presentation.


\section{Appendix: Application of Banach's fixed point algorithm}
One advantage of using Schauder's fixed point theorem is that it can be applied to large data. A disadvantage consists in the fact that it usually does not guarantee the uniqueness of a solution. To achieve this one needs to use for instance the Banach fixed point theorem, which however can only be applied to small data as a consequence of the contraction property that is required. In some preceding papers, see \cite{KraussharTrends1,KraAACA2014,CKK2019}, we already showed that the quaternionic operator calculus can also be successfully used in this context. For the sake of completeness we finish this paper by adding a concise summary on the application of Banach's fixed point theorem on the level of these quaternionic integral operators so that the reader gets a rather complete picture of the current state of the art.


To start we wish to emphasize that the representation formulas for the velocity $\bu$, the pressure $p$ and the magnetic field $\bB$ presented in System \ref{Eq:03} (19)-(22) also hold when the convective terms are high in comparison with the viscous terms. In the case where the viscous terms dominate the convective terms (the exact conditions will be given Theorem~3 and Theorem~4) we may use the following Banach fixed point algorithm to compute ${\bf u}$, ${\bf B}$ and $p$, see \cite{KraAACA2014}: 

\begin{eqnarray*}
{\bf u}_n &=& \frac{R_e}{\mu_0} T_G {\bf Q} T_G \Big[\Ve(({\bf D}{\bf B}_{n-1}) {\bf B}_{n-1})-\Sc({\bf u}_{n-1}{\bf D}){\bf u}_{n-1}\Big] - R_e^2 T_G {\bf Q} T_G {\bD}   p_n\\
\Sc({\bf Q}  p_n)&=&\frac{1}{\mu_0} R_e\Big[{\bf Q} T_G \Ve(({\bf D}{\bf B}_{n-1}) {\bf B}_{n-1}) - \Sc({\bf u}_{n-1}{\bf D}){\bf u}_{n-1}\Big]\\
{\bf B}_n &=& R_m^2 T_G {\bf Q} T_G \Big[\Sc({\bf B}_n{\bf D}){\bf u}_n - \Sc({\bf u}_n{\bf D}){\bf B}_n \Big],
\end{eqnarray*}where $n=1,2,\ldots$

\begin{remark}
Note that whenever the equations (\ref{velocitynonlinear}) and (\ref{pressurenonlinear}) have a unique solution, then also (\ref{eqn_magnetic}) will have a unique solution. 
\end{remark}
 
In practice we can first compute ${\bf B}_n$ by applying the inner iteration:
\begin{equation}\label{inner}
{{\bf B}_n}^{(i)} = R_m^2 T_G {\bf Q} T_G \Big[\Sc({{\bf B}_n}^{(i-1)}{\bf D}){\bf u}_n - \Sc({\bf u}_n{\bf D}){{\bf B}_n}^{(i-1)} \Big],\;\;i=1,2,\ldots
\end{equation}
In \cite{KraAACA2014} we  proved 
\begin{theorem}
Suppose that ${\bf u}_n \in {\stackrel{\circ}{H^1}}(G)$ and that $\bu$ fulfils the condition  
\begin{equation}\label{cond1}
\|{\bf u}_n\|_{H^1} < \frac{1}{2 C_1 C_S R_m^2}.
\end{equation}
Then the problem 
\begin{equation}\label{probl}
{\bf B}_n = R_m^2 T_G {\bf Q} T_G \Big[\Re({\bf B}_n{\bf D}){\bf u}_n - \Re({\bf u}_n{\bf D}){\bf B}_n \Big]
\end{equation}  
has a uniquely solution in $H^{1}(G)$ . The sequence defined in~(\ref{inner}) converges in $H^{1}(G)$ to a unique solution of (\ref{probl}). 
\end{theorem}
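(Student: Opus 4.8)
The plan is to read (\ref{probl}) as a fixed point equation $\bB_n = \Phi(\bB_n)$ for the operator
\[
\Phi(\bB) := R_m^2\, T_G Q T_G\bigl[\Sc(\bB\bD)\bu_n - \Sc(\bu_n\bD)\bB\bigr] + \bg ,
\]
acting on the Banach space $H^1(G;\hH)$, where $\bg$ is the fixed inhomogeneous contribution (equal to $\mathbf{0}$ in the homogeneous formulation of (\ref{probl}), and equal to $F_{\Gamma}\bh + F_G P_G \bH$ in the presence of boundary data). Note that $\Phi$ is \emph{affine} in $\bB$: both $\Sc(\bB\bD)\bu_n$ and $\Sc(\bu_n\bD)\bB$ depend linearly on $\bB$. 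The proof then has the standard three steps: (i) $\Phi$ maps $H^1(G;\hH)$ into itself; (ii) under the smallness hypothesis (\ref{cond1}) the map $\Phi$ is a strict contraction; (iii) conclude via the Banach fixed point theorem and identify the iterates (\ref{inner}) with the Picard sequence of $\Phi$.

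For step (i) I would combine the mapping properties collected in Section~2.2.3. By Lemma~3 and the Remark following it, the bilinear expressions $\Sc(\bB\bD)\bu_n$ and $\Sc(\bu_n\bD)\bB$ belong to $L^q(G;\hH)$ for $1<q<3/2$, with $L^q$-norm bounded by $C_S\|\bB\|_{H^1}\|\bu_n\|_{H^1}$; here one uses $\bu_n\in{\stackrel{\circ}{H^1}}(G)$ so that the Poincar\'e-type control in Lemma~3 is available. Since $T_G$ is bounded from $L^q$, $1<q<3/2$, into $L^2$, then $Q$ is bounded on $L^2$, and the inner $T_G$ gains one Sobolev derivative because $\bD T_G = I$ (cf.\ (\ref{DT})), the composition $T_G Q T_G$ sends these terms into $H^1(G;\hH)$, with $\|T_G Q T_G\|\le C_1$ by Lemma~2. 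Hence $\Phi$ is a well-defined affine self-map of $H^1(G;\hH)$.

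For step (ii), the affine structure removes $\bg$ from the difference: for $\bB,\bB'\in H^1(G;\hH)$,
\[
\Phi(\bB)-\Phi(\bB') = R_m^2\, T_G Q T_G\bigl[\Sc((\bB-\bB')\bD)\bu_n - \Sc(\bu_n\bD)(\bB-\bB')\bigr].
\]
Applying Lemma~2 to the operator $T_G Q T_G$ and then Lemma~3 to each of the two terms gives
\begin{align*}
\|\Phi(\bB)-\Phi(\bB')\|_{H^1} &\le R_m^2\, C_1\bigl(C_S\|\bB-\bB'\|_{H^1}\|\bu_n\|_{H^1} + C_S\|\bu_n\|_{H^1}\|\bB-\bB'\|_{H^1}\bigr)\\
&= 2 C_1 C_S R_m^2\,\|\bu_n\|_{H^1}\,\|\bB-\bB'\|_{H^1},
\end{align*}
so the Lipschitz constant of $\Phi$ equals $\theta := 2 C_1 C_S R_m^2\|\bu_n\|_{H^1}$, which is strictly less than $1$ precisely under condition (\ref{cond1}). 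Step (iii) is then immediate: $H^1(G;\hH)$ is complete, so the Banach fixed point theorem yields a unique $\bB_n\in H^1(G;\hH)$ solving (\ref{probl}); and since by construction $\bB_n^{(i)} = \Phi(\bB_n^{(i-1)})$, the inner iteration (\ref{inner}) is exactly the Picard sequence for $\Phi$, hence converges to $\bB_n$ in $H^1(G;\hH)$ with the geometric error bound $\|\bB_n^{(i)}-\bB_n\|_{H^1}\le \frac{\theta^i}{1-\theta}\|\bB_n^{(1)}-\bB_n^{(0)}\|_{H^1}$.

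The main obstacle I expect is step (i): one has to be careful that $T_G Q T_G$ genuinely lands in $H^1$ (not merely $L^2$) when applied to an $L^q$-datum with $1<q<3/2$, and that the operator-norm bound $\|T_G Q T_G\|\le C_1$ of Lemma~2 is exactly the one governing the $L^q\to H^1$ mapping used in the contraction estimate. Reconciling the different scales---$L^q\to L^2$ for the outer $T_G$, $L^2\to L^2$ for the orthogonal projector $Q$, and the derivative-gaining identity $\bD T_G = I$ for the inner $T_G$---is the delicate bookkeeping; everything else is a routine application of the Banach fixed point theorem.
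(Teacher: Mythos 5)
Your proposal is correct and follows essentially the same route as the paper's own (sketched) proof: both read (\ref{probl}) as a fixed point equation that is linear/affine in $\bB$, combine Lemma~2 ($\|T_G Q T_G\|\le C_1$) with Lemma~3 (the $C_S$ bounds on the convective terms) to obtain the Lipschitz constant $2C_1 C_S R_m^2\|\bu_n\|_{H^1}<1$ under (\ref{cond1}), and conclude by Banach's fixed point theorem with the inner iteration (\ref{inner}) as the Picard sequence. Your additional care in step (i) about the $L^q\to L^2\to H^1$ mapping scales is a sound elaboration of a point the paper leaves implicit, not a different method.
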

{\bf Sketch of the proof} (for all the details, see \cite{KraAACA2014}). 
Note that to show the convergence of the inner iteration proposed in (\ref{inner}), 
we can directly use the estimates from Lemma 2 and Lemma 3. From the simple fact that $\Sc(a\pm b)=\Sc(a)\pm \Sc(b)$, we may infer from Lemma 3 that  
$$
\|\Sc({\bf B}_n^{(i-1)}{\bf D})-\Sc({\bf B}_n^{(i-2)}{\bf D})\|_{H^1} \le C_S \|{\bf B}_n^{(i-1)}-{\bf B}_n^{(i-2)}\|_{H^1},
$$ 
involving again the same positive constant $C_S$. The norm estimates of Lemma 2 and Lemma 3 yield   
\begin{eqnarray*}
\|{\bf B}_n^{(i)} - {\bf B}_n^{(i-1)}\|_{H^1} &=&R_m^2 \|T_G {\bf Q} T_G [\Re({\bf B}_n^{(i-1)}{\bf D}){\bf u}_n - \Re({\bf u}_n {\bf D}) {\bf B}_n^{(i-1)}] \|_{H^1}\\
& \le & R_m^2 C_1 \|\Re(({\bf B}_n^{(i-1)}-{\bf B}_{n}^{(i-2)}){\bf D})\| \|{\bf u}_n\|_{H^1} 
+R_m^2 C_1 \|\Re({\bf u}_n {\bf D})({\bf B}_n^{(i-1)}-{\bf B}_n^{(i-2)})\|_{H^1}\\
& \le & 2 R_m^2 C_1 C_S \|{\bf B}_n^{(i-1)}-{\bf B}_n^{(i-2)}\|_{H^1} \|{\bf u}_n\|_{H^1}.
\end{eqnarray*}
The classical Banach's fixed point theorem now tells us that ${\bf B}_n^{(i)}$ $i=1,2,\ldots$ converges under the condition (\ref{cond1}) 
to a unique solution for $\bB_n$. Finally, since each ${\bf u}_n$ is an element of ${\stackrel{\circ}{H^1}}(G)$, it is bounded over $G$. 
 \hfill $\blacksquare$

\par\medskip\par

Now one can further estimate using Lemma 2 and Lemma 3 that 
$$
\|\bB_n - \bB_{n-1}\|_{H^1} \le R_m^2 C_1 F \|\bu_n-\bu_{n-1}\|_{H^1}
$$
with $F:= 2C_S(\|\bB_{n}\|_{H^1}+\|\bB_{n-1}\|_{H^1}) \le 4 C_S \sup\limits_{n \in \mathbb{N}}\{\|\bB_n\|_{H^1}\}\}$. For details see again \cite{KraAACA2014}. The problem is thus reduced to show the convergence of $(\bu_n)_{n \in \mathbb{N}}$.

\par\medskip\par

Following further \cite{KraAACA2014} one can estimate 
$$
\|{\bf u}_n-{\bf u}_{n-1}\|_{H^1} \le  2 R_e^2 C_1  \|M({\bf u}_{n-1}) - M({\bf u}_{n-2})\|_{H^1}.
$$
Furthermore, with $C_3:=\|{\bf u}_{n-1}\|_{H^1}+\|{\bf u}_{n-2}\|_{H^1}$ and $C_4:=\|{\bf B}_{n-1}\|_{H^1}+\|{\bf B}_{n-2}\|_{H^1} \le 2 \sup_{n \in {\mathbb{N}}}\{\|{\bf B}_n\|_{H^1}\}$ we have 
\begin{eqnarray*}
  \|M({\bf u}_{n-1}) - M({\bf u}_{n-2})\|_{H^1}
 &\le & C_S C_3 \|{\bf u}_{n-1} - {\bf u}_{n-2}\|_{H^1}\\
 &&\quad + \frac{C_S C_4}{\mu_0}R_m^2 C_1 F  \|{\bf u}_{n-1} - {\bf u}_{n-2}\|_{H^1}\\
 &&\quad + \frac{C_4}{2 \mu_0} R_m^2 C_1 F \|{\bf u}_{n-1} - {\bf u}_{n-2}\|_{H^1} 
\end{eqnarray*}
This leads to the estimate  
\begin{equation}
\|{\bf u}_n-{\bf u}_{n-1}\|_{H^1} \le  L_n \|{\bf u}_{n-1}-{\bf u}_{n-2}\|_{H^1}
\end{equation}
with the Lipschitz constant
\begin{equation}
\label{Ln}
L_n:=2 R_e^2 C_1 \Big(C_S C_3 + \frac{(1/2+C_S)C_4}{\mu_0}R_m^2 C_1 F \Big)
\end{equation}

Banach's fixed point theorem tells us that we get convergence if $L_n < 1$. 

\par\medskip\par
We can say more. 

\begin{theorem}(cf. \cite{KraAACA2014}) Under the conditions 
$$
\frac{1}{\mu_0} \sup_{n \in \mathbb{N}}\|{\bf B}_n\|_{H^1}^2 \le \frac{1}{16 C_1^2 C_S^2 R_e^4}
$$
and 
$$
R_m^2 < \frac{4C_S^2R_e^2(8W R_e^2 C_1 C_S-1)}{1+2C_S},
$$ 
where
$$
W:=\sqrt{\frac{1}{4C_1^2C_S^2 R_e^4}-\frac{1}{\mu_0}\sup_{n \in \mathbb{N}}\|{\bf B}_n\|_{H^1}^2}
$$
the proposed fixed point algorithm converges to a unique triple of solutions $({\bf u},p,{\bf B})$ of the boundary value problem (11)-(15), where $p$ is unique up to a constant when $G$ is bounded, and fully unique when $G$ is unbounded.    
\end{theorem}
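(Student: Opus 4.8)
The plan, following \cite{KraAACA2014}, is to collapse the whole coupled scheme into a single genuine contraction and then invoke Banach's fixed point theorem. Most of the groundwork is already in place: Theorem 3 guarantees that for each admissible ${\bf u}_n$ the inner iteration (\ref{inner}) produces a unique ${\bf B}_n$; the transfer estimate $\|{\bf B}_n-{\bf B}_{n-1}\|_{H^1}\le R_m^2 C_1 F\,\|{\bf u}_n-{\bf u}_{n-1}\|_{H^1}$, with $F\le 4C_S\sup_k\|{\bf B}_k\|_{H^1}$, carries any contraction for the ${\bf u}$-iteration over to the ${\bf B}$-iteration; and $(p_n)$ is slaved to $({\bf u}_{n-1},{\bf B}_{n-1})$ through the pressure equation of the iteration. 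So everything reduces to showing that the Lipschitz constant $L_n$ in (\ref{Ln}) satisfies $L_n\le L<1$ uniformly in $n$. The argument then splits into three parts: (i) a uniform a~priori bound on $\|{\bf u}_n\|_{H^1}$; (ii) the verification that the two displayed hypotheses are exactly what makes $L_n<1$; (iii) passage to the limit and the uniqueness discussion.

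For (i) I would estimate $\|{\bf u}_n\|_{H^1}$ directly from the displayed Banach iteration, using $\|T_\Omega Q T_\Omega\|\le C_1$ (Lemma 2), the Poisson-type estimates of Lemma 3, and the $L^q\to L^2$ boundedness of $T_\Omega$ for $1<q<3/2$ (the Remark after Lemma 3, which is what lets the quadratic nonlinearities land in $L^2$), together with Lemma 1 (equivalently, (\ref{estim1})) to dispose of the pressure-gradient term. This produces a self-improving inequality of the schematic shape $\|{\bf u}_n\|_{H^1}\le A\,\|{\bf u}_{n-1}\|_{H^1}^2+A'\,\beta^2$, where $\beta^2:=\sup_k\|{\bf B}_k\|_{H^1}^2$ and $A,A'$ are explicit multiples of $R_e^2 C_1 C_S$ and $R_e^2 C_1 C_S/\mu_0$. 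The scalar map $x\mapsto Ax^2+A'\beta^2$ has a finite attracting fixed point precisely when its discriminant $1-4AA'\beta^2$ is nonnegative, i.e. precisely when $\frac{1}{\mu_0}\beta^2\le\frac{1}{16C_1^2C_S^2R_e^4}$, which is the first hypothesis; starting from the natural initial guess ${\bf u}_0={\bf B}_0=0$ keeps all iterates in the small basin, and the resulting bound is the smaller root, whose size is governed by $W=\sqrt{\frac{1}{4C_1^2C_S^2R_e^4}-\frac{1}{\mu_0}\beta^2}$, the normalised square root of that discriminant. Consequently $C_3=\|{\bf u}_{n-1}\|_{H^1}+\|{\bf u}_{n-2}\|_{H^1}$ and $C_4=\|{\bf B}_{n-1}\|_{H^1}+\|{\bf B}_{n-2}\|_{H^1}\le 2\beta$ become bounded uniformly in $n$.

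For (ii) I would insert these a~priori bounds, together with $F\le 4C_S\beta$, into (\ref{Ln}) and impose $L_n<1$. Expanding $2R_e^2C_1\bigl(C_SC_3+\tfrac{(1/2+C_S)C_4}{\mu_0}R_m^2C_1F\bigr)<1$, the $R_m$-free summand $2R_e^2C_1C_SC_3$ is controlled by the $W$-bound on $C_3$ — and one checks, using the first hypothesis, that $8WR_e^2C_1C_S>1$, so that the admissible interval for $R_m^2$ is non-empty — while solving the remaining inequality, which is linear in $R_m^2$, gives exactly $R_m^2<\frac{4C_S^2R_e^2(8WR_e^2C_1C_S-1)}{1+2C_S}$, the second hypothesis. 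With $L_n\le L<1$, Banach's fixed point theorem yields a Cauchy sequence $({\bf u}_n)$ in ${\stackrel{\circ}{H^1}}(G)$; then $({\bf B}_n)$ is Cauchy by the transfer estimate and $(p_n)$ by its own (linear) equation, and the three limits $({\bf u},{\bf B},p)$ solve the fixed-point system, equivalently (\ref{velocitynonlinear})--(\ref{eqn_magnetic}), by continuity of the operators involved. Reversing the manipulations with (\ref{DT}) and (\ref{TD}) that led from System \ref{Eq:02} to System \ref{Eq:03} shows that $({\bf u},{\bf B},p)$ solves the original boundary value problem. Uniqueness of $({\bf u},{\bf B})$ is immediate from the contraction; the pressure enters only through ${\bf D}p=\grad p$, so on a connected bounded $G$ it is determined up to an additive constant, while on an unbounded $G$ no non-zero constant belongs to the relevant $L^2$-based space and $p$ is fully determined.

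The step I expect to be the genuine obstacle is (i): since the scheme couples ${\bf u}_n$ and ${\bf B}_n$ nonlinearly, the self-improving inequality is really two-dimensional, and one must check carefully that the discriminant condition in the statement is precisely what keeps the coupled scalar recursion in the bounded basin rather than letting it escape to the large root. Matching the numerical constants — the $16$, the $1/2+C_S$, the $1+2C_S$, and the precise form of $W$ — to the estimates of Lemmas 1--3 is where all the bookkeeping lives; everything downstream of a uniform $L<1$ is the routine Banach-iteration argument already rehearsed in the proof of Theorem 3.
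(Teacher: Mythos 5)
Your plan follows essentially the same route as the paper, which itself only assembles the scaffolding -- the iteration scheme, Theorem~3 for the inner iteration, the transfer estimate for $\|{\bf B}_n-{\bf B}_{n-1}\|_{H^1}$, and the Lipschitz constant $L_n$ of (\ref{Ln}) with the criterion $L_n<1$ -- and defers the detailed verification that the two stated hypotheses enforce $L_n<1$ to \cite{KraAACA2014}. Your reconstruction (a quadratic a~priori recursion for $\|{\bf u}_n\|_{H^1}$ whose discriminant condition is the first hypothesis and produces $W$, then solving $L_n<1$ linearly in $R_m^2$ to obtain the second hypothesis, checking $8WR_e^2C_1C_S>1$, and finishing with the standard Banach limit and uniqueness argument with $p$ determined only through $\grad p$) is exactly the intended argument, with only the constant bookkeeping, which you explicitly flag, left to be matched against the cited reference.
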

For the detailed proof we refer  \cite{KraAACA2014}. 
\par\medskip\par
{\bf Remark}: The interested reader may also consult \cite{GS3} Section~7.7.6 where the authors also followed basically the same line of arguments using a slightly different notation but established rather analogous estimates based on similar norm constants. The results are qualitatively consistent with those that were obtained previously in \cite{KraAACA2014}. 
 
\par\medskip\par
{\bf Remarks}: This method provides us with an explicit estimate on the Lipschitz contraction constant giving an a priori estimate on how many iterations are needed, cf. \cite{GS1,GS2}.  However, since the Lipschitz constant must be smaller than $1$, it can only be applied to small data. A further advantage of the quaternionic operator calculus method is that it always guarantees strong convergence when switching to the discretized versions of these quaternionic operators. It is worthwhile to mention that in many (practical) situations the Teodorescu transform can also be replaced by a simpler primitivation operator \cite{BGSS} whose evaluation requires much less computational steps. This is also a significant input and added value of the quaternionic function theory, since these fnction theoretical concept allow the replacement of a 3D-integral by a one-dimensional primitivation under certain circumstances. Details and extensions to the instationary case will be treated in one of our follow up papers.  


\section*{Acknowledgments}
P. Cerejeiras and U. K\"ahler were supported by Portuguese funds through the CIDMA - Center for Research and Development in Mathematics and Applications, and the Portuguese Foundation for Science and Technology (``FCT--Funda\c{c}\~ao para a Ci\^encia e a Tecnologia''), within project UIDB/04106/2020 and UIDP/04106/2020. They also acknowledge the support of the NSFC project ref: NSFC11971178.

\vspace{1cm}
AFFILIATIONS:\\[0.2cm] 
P. Cerejeiras: Departamento de Matem\'atica, Universidade de Aveiro, P 3810-193 Aveiro, Portugal. E-Mail: {\tt pceres@ua.pt}\\[0.2cm] 
U. K\"ahler: Departamento de Matem\'atica, Universidade de Aveiro, P 3810-193 Aveiro, Portugal. E-Mail: {\tt ukaehler@ua.pt}\\[0.2cm] 
R.S. Krau{\ss}har: Chair of Mathematics, Erziehungswissenschaftliche Fakult\"at, Universt\"at Erfurt, Nordh\"auser Str. 63, D-99089 Erfurt, Germany.  E-mail: {\tt soeren.krausshar@uni-erfurt.de

\end{document}